\numberwithin{equation}{section}
\newtheorem{thm}{Theorem}[section]
\newtheorem{cor}[thm]{Corollary}
\newtheorem{lma}[thm]{Lemma}
\newtheorem{cnj}[thm]{Conjecture}
\theoremstyle{definition}
\newtheorem{dfn}[thm]{Definition}
\theoremstyle{remark}
\newtheorem{rmk}[thm]{Remark}
\numberwithin{equation}{section}
\newcommand{\R}{{\mathbb{R}}}
\newcommand{\Z}{{\mathbb{Z}}}
\DeclareMathOperator{\Perf}{Perf}
\DeclareMathOperator{\Core}{Core}
\DeclareMathOperator{\id}{id}
\DeclareMathOperator{\Aug}{\mathcal{A}ug}
\DeclareMathOperator*{\colim}{colim}
\newcommand{\suchthat}{\;\ifnum\currentgrouptype=16 \middle\fi|\;}
\begin{document}

\title{Singular Legendrian unknot links and relative Ginzburg algebras}
\author{Johan Asplund}
\address{Department of Mathematics, Stony Brook University, 100 Nicolls Road, Stony Brook, NY 11794}
\email{johan.asplund@stonybrook.edu}

\begin{abstract}
	We associate to a quiver and a subquiver $(Q,F)$ a stopped Weinstein manifold $X$ whose Legendrian attaching link is a singular Legendrian unknot link $\varLambda$. We prove that the relative Ginzburg algebra of $(Q,F)$ is quasi-isomorphic to the Chekanov--Eliashberg dg-algebra of $\varLambda$. It follows that the Chekanov--Eliashberg dg-algebra of $\varLambda$ relative to its boundary dg-subalgebra, and the Orlov functor associated to the partially wrapped Fukaya category of $X$ both admit a strong relative smooth Calabi--Yau structure.
\end{abstract}

\maketitle
\tableofcontents
\section{Introduction}\label{sec:intro}
	Let $Q$ be a quiver and let $F \subset Q$ be a subquiver called the \emph{frozen subquiver}. Define $Z_0(Q)$ to be the subcritical Weinstein manifold obtained by taking Weinstein connected sum of copies of $B^{2n}$ equipped with the standard Liouville form $\lambda = \frac 12 \sum_{i=1}^n (x_idy_i - y_idx_i)$ according to the arrows of $Q$. We define a singular Legendrian $\varLambda(Q,F)$ in the contact boundary of $Z_0(Q)$ by first taking a copy of the $(n-1)$-dimensional Legendrian unknot contained in a Darboux chart in the boundary of each copy of $B^{2n}$. Next, for each arrow in $Q$ we push part of the Legendrian unknot associated to its tail through the Weinstein $1$-handle to the copy of $B^{2n}$ associated with the head of the arrow. If the arrow is non-frozen we make the two Legendrians link once and if the arrow is frozen we make them intersect transversely once. The result is a singular Legendrian unknot link in the boundary of $Z_0(Q)$, see \cref{fig:singular_leg_unknot_link}. The pair $(Z_0(Q),\varLambda(Q,F))$ specifies a stopped Weinstein manifold $X(Q,F)$ by attaching top Weinstein handles to non-frozen components of $\varLambda(Q,F)$ and a stop to each frozen component.

	\begin{figure}[!htb]
		\centering
		\includegraphics[scale=1.5]{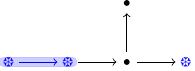}
		\hspace{1cm}
		\includegraphics[scale=0.75]{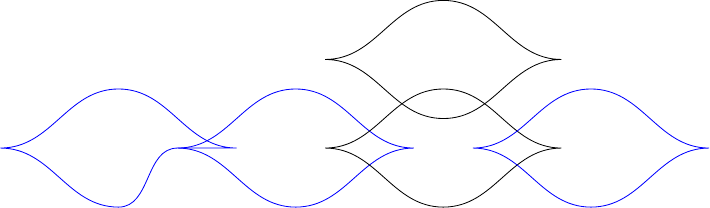}
		\caption{Left: A quiver $Q$ and frozen subquiver $F$ marked with blue snowflakes {\color{blue}\SnowflakeChevronBold} and blue arrows. Right: Front projection of corresponding singular Legendrian unknot link $\varLambda(Q,F)$ with frozen components marked in blue.}\label{fig:singular_leg_unknot_link}
	\end{figure}

	We let $CE^\ast(\varLambda(Q,F);Z_0(Q))$ denote the Chekanov--Eliashberg dg-algebra of $\varLambda(Q,F)$ over a field, which is defined as the Chekanov--Eliashberg dg-algebra after taking a Weinstein neighborhood of each frozen component of $\varLambda(Q,F)$, see \cite[Section 3.1]{asplund2020chekanov}. There is a canonical geometrically defined dg-subalgebra $\mathcal B \hookrightarrow CE^\ast(\varLambda(Q,F);Z_0(Q))$ that in particular is empty if $F = \varnothing$. Let $\mathscr G^\ast_n(Q,F)$ denote the $n$-dimensional relative Ginzburg algebra of $(Q,F)$ (see \cref{dfn:rel_ginzburg}). There is a map of dg-algebras $G \colon \mathscr G_{n-1}^\ast(F) \to \mathscr G_n^\ast(Q,F)$ (see \cref{lma:inclusion_rel_ginz}), where $\mathscr G_{n-1}^\ast(F) := \mathscr G_{n-1}^\ast(F,\varnothing)$.
	\begin{thm}\label{thm:main}
		Let $n\geq 4$ and consider a quiver $Q$ together with a subquiver $F \subset Q$. There are quasi-isomorphisms of dg-algebras $CE^\ast(\varLambda(Q,F);Z_0(Q)) \cong \mathscr G^\ast_n(Q,F)$ and $\mathcal B \cong \mathscr G^\ast_{n-1}(F)$, such that the following diagram commutes
		\[
			\begin{tikzcd}[row sep=scriptsize, column sep=scriptsize]
				\mathcal B \rar[hook]{\text{incl.}} \dar{\cong} & CE^\ast(\varLambda(Q,F);Z_0(Q)) \dar{\cong} \\
				\mathscr G^{\ast}_{n-1}(F) \rar{G} & \mathscr G^\ast_n(Q,F)
			\end{tikzcd}.
		\]
	\end{thm}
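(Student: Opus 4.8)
The plan is to compute the Chekanov--Eliashberg dg-algebra of the singular Legendrian unknot link $\varLambda(Q,F)$ explicitly and to match it term by term with the combinatorial description of the relative Ginzburg algebra $\mathscr G_n^\ast(Q,F)$. The main tool will be a surgery/composition formula for the Chekanov--Eliashberg dg-algebra that reduces its computation to local pieces supported near each copy of $B^{2n}$ and near each Weinstein $1$-handle. Concretely, I would first establish the \emph{local models}: for a single Legendrian unknot in a Darboux ball its Chekanov--Eliashberg dg-algebra is well understood (it is generated by a small number of Reeb chords, corresponding in the Ginzburg picture to the vertex idempotent together with its dual loop), and for a single arrow I would compute the dg-algebra of the two-component (linked or intersecting) local link inside the connected-sum region. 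The expectation is that a non-frozen arrow contributes a pair of dual generators $a, a^\ast$ in complementary degrees plus a higher-degree generator enforcing the relation, precisely mirroring the arrow, its reverse, and the loop generator in the Ginzburg construction, whereas a frozen arrow (transverse intersection rather than a clasp) contributes only the pair $a,a^\ast$ without the top relation-generator, which is exactly the distinction between $\mathscr G_n^\ast(Q,F)$ and $\mathscr G_n^\ast(Q,\varnothing)$.

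The key structural input is that the global dg-algebra is assembled from these local pieces by a van~Kampen--type gluing: because $Z_0(Q)$ is built as an iterated Weinstein connected sum along the arrows of $Q$, the Reeb chords of $\varLambda(Q,F)$ decompose into chords internal to each unknot component and chords that traverse a $1$-handle, and the differential counts holomorphic disks that, after stretching the neck in each $1$-handle, localize to the model computations above. In dimension $n\ge 4$ the relevant moduli spaces are rigid and the degree reasons force the only contributing disks to be the ones already visible in the local models, so no unexpected higher products appear; this dimensional hypothesis is exactly what lets me avoid low-dimensional pathologies and read off the differential combinatorially. I would then identify the resulting quiver-with-potential presentation with $\mathscr G_n^\ast(Q,F)$ by matching generators to (frozen and non-frozen) arrows and their duals, matching the $\varLambda$-internal generators to vertices, and checking that the differential reproduces the Ginzburg potential.

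Next I would treat the boundary dg-subalgebra $\mathcal B$. By its geometric definition $\mathcal B$ is the Chekanov--Eliashberg dg-algebra computed in the Weinstein neighborhoods of the frozen components, so the same local computation restricted to the frozen part of the link yields a presentation that I claim is identical to $\mathscr G_{n-1}^\ast(F)$. The drop in dimension from $n$ to $n-1$ should arise because passing to the Weinstein neighborhood of a frozen Legendrian effectively lowers the relevant contact dimension by one, so the frozen link, viewed intrinsically, is the $\varLambda$ associated to the quiver $F$ one dimension down. The commutativity of the square then amounts to checking that the inclusion $\mathcal B\hookrightarrow CE^\ast(\varLambda(Q,F);Z_0(Q))$ corresponds under these identifications to the dg-algebra map $G\colon \mathscr G_{n-1}^\ast(F)\to \mathscr G_n^\ast(Q,F)$ from \cref{lma:inclusion_rel_ginz}; since both maps are the evident inclusions on generators, commutativity should reduce to compatibility of the two identifications on the frozen generators, which is immediate once the isomorphisms are chosen to be the identity on those generators.

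The hard part will be justifying the neck-stretching/gluing step rigorously: one must prove that the holomorphic curves contributing to the differential of the glued Legendrian are in bijection with broken configurations assembled from the local models, with no additional boundary-degenerate or multiply-covered contributions, and that this bijection is compatible with the chosen coefficients and signs (orientations of moduli spaces). Controlling the output degrees so that the grading matches the Ginzburg grading, and verifying that the duality $a\leftrightarrow a^\ast$ is the correct one (so that the strong relative Calabi--Yau structure claimed in the abstract is consistent), is where most of the care is needed; the hypothesis $n\ge 4$ is what I expect to shoulder the burden of ruling out the low-dimensional exceptional disks.
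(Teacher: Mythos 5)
Your overall strategy coincides with the paper's: decompose $Z_0(Q)$ into local pieces indexed by vertices and arrows, compute the Chekanov--Eliashberg dg-algebra of each local piece of the link, assemble the global dg-algebra by a colimit/gluing formula (the paper invokes the gluing theorem of \cite{asplund2022tangle} rather than redoing the neck-stretching analysis), and finally write down an explicit chain homotopy equivalence onto the relative Ginzburg algebra. The gap is in your local model at a frozen arrow, which is precisely the new content of the relative case, and as stated your matching would not produce $\mathscr G_n^\ast(Q,F)$.

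Two concrete problems. First, you misstate the target algebra: by \cref{dfn:rel_ginzburg} a frozen arrow $g\in F_1$ contributes \emph{only} the degree-$0$ generator $g$ and no dual $g^\ast$, and it is the frozen \emph{vertices} that lose the relation-enforcing loop $h_v$ (that generator is attached to vertices, not arrows). Your claim that a frozen arrow ``contributes only the pair $a,a^\ast$ without the top relation-generator'' therefore does not describe the distinction between $\mathscr G_n^\ast(Q,F)$ and $\mathscr G_n^\ast(Q,\varnothing)$. Second, and more importantly, the geometry does not simply omit generators at a transverse intersection; it adds one. The local link there is the high-dimensional $A_2$-bouquet, and its dg-algebra $\mathcal A_e$ (for $e\in F_1$) contains a degree-$0$ chord $h_e$, a would-be dual $h_e^\ast$ in degree $3-n$, \emph{and} an extra chord $c$ at the intersection point with $\partial c=h_e^\ast$. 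The dual chord is present but rendered exact by $c$, and it is this cancellation --- implemented in the paper by the explicit maps $\tau$, $\varphi$ and the homotopy $K$ --- that yields the relative Ginzburg algebra. Without identifying this mechanism your generator-matching breaks at every frozen arrow. Relatedly, the surviving boundary generators $h_e$, $h_e^\ast$, $b_v$ (degrees $0$, $3-n$, $2-n$) are exactly the arrows, dual arrows and loops of $\mathscr G^\ast_{n-1}(F)$ because $\partial\overline{\varLambda}(Q,F)\subset\partial P_0(F)$ is the attaching link of a plumbing of copies of $T^\ast S^{n-1}$ along $F$; your ``dimension drop'' heuristic lands on the right answer, but the degree bookkeeping has to be checked against this identification rather than asserted.
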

	\begin{rmk}
		\begin{enumerate}
			\item \cref{thm:main} also holds for $n = 3$ if every vertex of $F$ has valency less than three.
			\item If $F_1 = \varnothing$, then $\mathcal B \cong C_{-\ast}(\varOmega \varLambda(F))$ where $\varLambda(F) \subset \varLambda(Q,F)$ is the Legendrian sublink consisting of the frozen components only, and the dg-algebra $CE^\ast(\varLambda(Q,F);Z_0(Q))$ is the Chekanov--Eliashberg dg-algebra of $\varLambda(Q,F)$ with loop space coefficients in $C_{-\ast}(\varOmega \varLambda(F))$ as defined by Ekholm--Lekili \cite[Section 3.2]{ekholm2017duality}.
			\item \cref{thm:main} was proven in the case of $F = \varnothing$ in \cite[Corollary 4.16]{asplund2021simplicial}.
		\end{enumerate}
	\end{rmk}
	Brav--Dyckerhoff introduced the notion of a (strong) relative Calabi--Yau structure \cite{brav2019relative}. In their terminology a \emph{left} Calabi--Yau structure is a a \emph{smooth} Calabi--Yau structure in the terminology of Kontsevich--Soibelman \cite{kontsevich2009notes}. A relative smooth Calabi--Yau structure is called \emph{weak} if the quasi-isomorphisms in \cite[(1.13)]{brav2019relative} does not necessarily come from a class in relative negative cyclic homology, and \emph{strong} if it does.

	\begin{cor}\label{cor:incl_ce}
		The canonical inclusion $\mathcal B \hookrightarrow CE^\ast(\varLambda(Q,F);Z_0(Q))$ admits a strong relative smooth $n$-Calabi--Yau structure.
	\end{cor}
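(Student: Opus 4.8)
The plan is to deduce the corollary from \cref{thm:main} by transporting a relative Calabi--Yau structure across the identification it provides. First I would note that the relative Ginzburg algebra already carries such a structure by construction: by \cref{dfn:rel_ginzburg} the map $G \colon \mathscr G_{n-1}^\ast(F) \to \mathscr G_n^\ast(Q,F)$ exhibits $\mathscr G_n^\ast(Q,F)$ as a relative deformed Calabi--Yau completion of degree $n$, and relative deformed Calabi--Yau completions carry a canonical strong relative smooth $n$-Calabi--Yau structure (the relative analogue of Keller's theorem for absolute deformed Calabi--Yau completions). The witnessing class lives in the relative negative cyclic homology of the pair, so the structure is strong in the sense of \cite{brav2019relative}, and it is smooth (left) because the Calabi--Yau completion is built to produce precisely smooth Calabi--Yau structures. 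The dimension bookkeeping matches: a relative $n$-Calabi--Yau structure on $G$ restricts to an absolute $(n-1)$-Calabi--Yau structure on $\mathscr G_{n-1}^\ast(F)$, exactly the $(n-1)$-dimensional Ginzburg algebra of $F$.

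Next I would invoke the derived invariance of relative Calabi--Yau structures. A strong relative smooth $n$-Calabi--Yau structure on a dg-algebra map $f \colon A \to B$ is a class in the relative negative cyclic homology $HC_n^-(B,A)$ satisfying a non-degeneracy condition expressed through maps of bimodules. Both $HC_n^-(-,-)$ and the non-degeneracy condition are invariant under quasi-isomorphism, and a commutative square of dg-algebra maps whose vertical arrows are quasi-isomorphisms induces an isomorphism on relative negative cyclic homology under which non-degenerate classes correspond. Applying this to the square of \cref{thm:main}, whose vertical maps $\mathcal B \xrightarrow{\cong} \mathscr G_{n-1}^\ast(F)$ and $CE^\ast(\varLambda(Q,F);Z_0(Q)) \xrightarrow{\cong} \mathscr G_n^\ast(Q,F)$ are quasi-isomorphisms, transports the canonical structure on $G$ to one on the inclusion $\mathcal B \hookrightarrow CE^\ast(\varLambda(Q,F);Z_0(Q))$, which proves the corollary.

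The hard part will be making the transport precise at the level of the cyclic witness rather than merely the underlying bimodule quasi-isomorphisms: I must check that the isomorphism on relative negative cyclic homology induced by the square of \cref{thm:main} sends the distinguished class to the distinguished class, equivalently that the quasi-isomorphisms are compatible with the Connes differential up to coherent homotopy. Since \cref{thm:main} supplies an honest commutative square of dg-algebra maps, functoriality of the cyclic bicomplex provides this compatibility, reducing the point to bookkeeping once naturality of relative negative cyclic homology is set up. An alternative that sidesteps transport is geometric: the stopped Weinstein manifold $X(Q,F)$ is a Weinstein pair, and the general construction of relative Calabi--Yau structures on partially wrapped Fukaya categories equips the Orlov functor --- and dually the inclusion $\mathcal B \hookrightarrow CE^\ast(\varLambda(Q,F);Z_0(Q))$ --- with a strong relative smooth $n$-Calabi--Yau structure directly; I would present the algebraic transport as the main argument and record the geometric construction as confirmation.
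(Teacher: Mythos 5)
Your proposal is correct and follows essentially the same route as the paper: \cref{lma:ginzburg_rel_cy} supplies the strong relative smooth $n$-Calabi--Yau structure on $G$ (via the identification of $\mathscr G^\ast_n(Q,F)$ with Wu's reduced relative Calabi--Yau completion and the results of Yeung, Bozec--Calaque--Scherotzke and Wu), and the commutative square of quasi-isomorphisms in \cref{thm:main} transports it to the inclusion $\mathcal B \hookrightarrow CE^\ast(\varLambda(Q,F);Z_0(Q))$. The only point you gloss over is that the identification of $\mathscr G^\ast_n(Q,F)$ with the relative Calabi--Yau completion is not literally ``by construction'' but requires the comparison carried out in \cref{lma:ginzburg_rel_cy}; otherwise the argument matches the paper's.
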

	\begin{proof}
		The map $G \colon \mathscr G^\ast_{n-1}(F) \to \mathscr G^\ast_n(Q, F)$ (see \cref{lma:inclusion_rel_ginz}) admits a strong relative smooth $n$-Calabi--Yau structure by \cref{lma:ginzburg_rel_cy}. Apply Theorem 1.1.
	\end{proof}
	Let $V(Q,F)$ denote a Weinstein neighborhood of the singular Legendrian $\varLambda(Q,F) \subset \partial Z_0(Q)$. It follows from \cite[Theorem 1.1]{asplund2020chekanov} that there is a derived equivalence
	\[
		\Perf CE^\ast(\varLambda(Q,F);Z_0(Q)) \cong \mathcal W(X(Q,F);V(Q,F)),
	\]
	where $\mathcal W(X(Q,F))$ denotes the partially wrapped Fukaya category of $X(Q,F)$ stopped at the Weinstein hypersurface $V(Q,F)$ and where $\Perf$ denotes the category of perfect modules, see \cite{sylvan2016partially,ganatra2017covariantly}. The Orlov functor $\iota_{V(Q,F)} \colon \mathcal W(V(Q,F)) \hookrightarrow \mathcal W(X(Q,F);V(Q,F))$ (see \cite[Section 2.4]{sylvan2019orlov} for the definition) fits in a commutative diagram
	\begin{equation}\label{eq:dia_orlov}
		\begin{tikzcd}[row sep=scriptsize, column sep=scriptsize]
			\Perf \mathcal B \rar[hook] \dar{\cong} & \Perf CE^\ast(\varLambda(Q,F);Z_0(Q)) \dar{\cong} \\
			\mathcal W(V(Q,F)) \rar[hook]{\iota_{V(Q,F)}} & \mathcal W(X(Q,F);V(Q,F))
		\end{tikzcd},
	\end{equation}
	leading to the following.
	\begin{cor}\label{cor:orlov}
		The Orlov functor $\iota_{V(Q,F)}$ admits a strong relative smooth $n$-Calabi--Yau structure.
	\end{cor}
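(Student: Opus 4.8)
The plan is to deduce this directly from \cref{cor:incl_ce} and the commutative square \eqref{eq:dia_orlov}, using that a strong relative smooth Calabi--Yau structure is a Morita-invariant notion that transports along equivalences. By \cref{cor:incl_ce}, the inclusion $\mathcal B \hookrightarrow CE^\ast(\varLambda(Q,F);Z_0(Q))$ carries a strong relative smooth $n$-Calabi--Yau structure, that is, a class in the relative negative cyclic homology of the pair whose associated maps are the quasi-isomorphisms of \cite[(1.13)]{brav2019relative}. Since relative negative cyclic homology is invariant under derived (Morita) equivalence, applying $\Perf$ produces a strong relative smooth $n$-Calabi--Yau structure on the induced functor $\Perf \mathcal B \hookrightarrow \Perf CE^\ast(\varLambda(Q,F);Z_0(Q))$.

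I would then transport this structure across the vertical arrows of \eqref{eq:dia_orlov}. These are the derived equivalences furnished by \cite[Theorem 1.1]{asplund2020chekanov}, and commutativity of the square identifies the functor $\Perf \mathcal B \hookrightarrow \Perf CE^\ast(\varLambda(Q,F);Z_0(Q))$ with the Orlov functor $\iota_{V(Q,F)}\colon \mathcal W(V(Q,F)) \hookrightarrow \mathcal W(X(Q,F);V(Q,F))$. By functoriality of relative Calabi--Yau structures under a commutative square of equivalences (see \cite{brav2019relative}), the defining negative cyclic class is carried to a strong relative smooth $n$-Calabi--Yau structure on $\iota_{V(Q,F)}$, which is the claim.

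The main obstacle is checking that the two vertical equivalences of \eqref{eq:dia_orlov} are compatible with the cyclic-homology data defining the Calabi--Yau structures, not merely equivalences of the underlying $A_\infty$-categories. Concretely, one must know that the equivalence of \cite[Theorem 1.1]{asplund2020chekanov} is induced by a functor whose effect on relative negative cyclic homology sends the algebraic Calabi--Yau class of \cref{cor:incl_ce} to the class prescribed by the geometry of the partially wrapped Fukaya categories; granting this compatibility, the transport in the previous paragraph is immediate.
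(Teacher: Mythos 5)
Your argument is exactly the paper's: the corollary is deduced by transporting the strong relative smooth $n$-Calabi--Yau structure of \cref{cor:incl_ce} across the commutative square \eqref{eq:dia_orlov} of derived equivalences, using Morita invariance of relative negative cyclic homology. The compatibility worry you raise at the end is not actually needed, since the statement only asserts that $\iota_{V(Q,F)}$ admits \emph{some} strong relative smooth $n$-Calabi--Yau structure, and any equivalence of pairs transports one.
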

	\subsection{Related results}
	As far as we know, the novelty of \cref{cor:incl_ce} is the existence of a \emph{strong} relative smooth Calabi--Yau structure. Quite a bit is known in the non-relative case or in the weak case, as we list below.
	\begin{itemize}
		\item In the case $F = \varnothing$ it abstractly follows from the Bourgeois--Ekholm--Eliashberg surgery isomorphism \cite{bourgeois2012effect,ekholm2017duality,ekholm2019holomorphic} together with the construction of a strong smooth $n$-Calabi--Yau structure on the wrapped Fukaya category due to Ganatra \cite[Theorem 1.3; Theorem 1.16]{ganatra2012symplectic,ganatra2019cyclic}.

		Another proof of this fact goes via \cite[Corollary 4.16]{asplund2021simplicial} and the fact that the (non-relative) Ginzburg algebra is a strong smooth $n$-Calabi--Yau algebra \cite[Theorem 6.3]{keller2011deformed}.
		\item Assume $Q = \bullet$, $F = \varnothing$ and let $\varLambda$ be a horizontally displaceable Legendrian sphere (not necessarily an unknot) in the contactization of a Liouville domain. Then Legout \cite{legout2023calabi} proved that the Chekanov--Eliashberg dg-algebra of $\varLambda$ admits a \emph{weak} $n$-Calabi--Yau structure via an explicit geometric construction.

		If $\varLambda \subset \R^3$ is not necessarily an unknot, Chen proves in forthcoming work that the augmentation category $\Aug_-(\varLambda;\Z_2)$ defined by Bourgeois--Chantraine \cite[Section 3.2]{bourgeois2014bilinearized} carries a strong proper 2-Calabi--Yau structure. In the terminology of Brav--Dyckerhoff \cite{brav2019relative} this is a \emph{right} Calabi--Yau structure.
		\item Assume $Q = F = \bullet$, and let $\varLambda \subset \R^3$ not necessarily an unknot. In forthcoming work by Ma--Sabloff it is proven that the $A_\infty$-functor $\Aug_+(\varLambda; \Z_2) \to \mathcal C(\varLambda;\Z_2)$ that is defined on morphisms as precomposition with the canonical inclusion $C_{-\ast}(\varOmega \varLambda) \hookrightarrow CE^\ast(\varLambda;C_{-\ast}(\varOmega \varLambda))$ carries a weak relative proper $2$-Calabi--Yau structure, where $\Aug_+$ denotes the augmentation category as defined in \cite[Section 4]{ng2020augmentations}.
		\item For general subcritical Weinstein manifolds and for general Legendrian submanifolds, \cref{cor:incl_ce} is proven in the weak case in forthcoming work by Dimitroglou Rizell--Legout via an explicit geometric construction
	\end{itemize}

	\cref{cor:orlov} is not new, but our proof is. It follows in the general case of stopped Weinstein manifolds admitting a stable polarization (which is always true for $X(Q,F)$) by combining work of Shende--Takeda \cite[Theorem 1.14]{shende2016calabi} and Ganatra--Pardon--Shende \cite[Theorem 1.4]{ganatra2018microlocal}.
	\subsection{Outline of the proof}
		The proof of \cref{thm:main} is a generalization of the computation in \cite[Section 4.3]{asplund2021simplicial}, where the Chekanov--Eliashberg dg-algebra of the Legendrian attaching link of a plumbing of copies of $T^\ast S^n$ for $n\geq 3$ was computed, which corresponds to the case $F = \varnothing$. A rough sketch of the proof of \cref{thm:main} is as follows.
		\begin{enumerate}
			\item Find a $1$-dimensional \emph{simplicial decomposition} of the subcritical Weinstein manifold $Z_0(Q)$ in the sense of \cite[Section 2.2.4]{asplund2021simplicial}.
			\item Find a singular Legendrian $\boldsymbol \varLambda(Q,F)$ relative to the simplicial decomposition of $Z_0(Q)$ whose completion is the singular Legendrian $\varLambda(Q,F)$ (see \cite[Definitions 4.1 and 4.2]{asplund2021simplicial}).
			\item Compute the Chekanov--Eliashberg dg-algebra of each part of $\boldsymbol \varLambda(Q,F)$. Each such piece is either a Legendrian unknot with boundary, a Legendrian Hopf link with boundary or two Legendrian unknots with a transverse intersection, with boundary.
			\item Using the local computation and the gluing formula for Chekanov--Eliashberg dg-algebras of singular Legendrians \cite[Theorem 2.43]{asplund2022tangle} we recover $CE^\ast(\varLambda(Q,F); Z_0(Q))$ up to quasi-isomorphism.
			\item Write down an explicit chain homotopy equivalence between the resulting Chekanov--Eliashberg dg-algebra and the relative Ginzburg algebra of $(Q,F)$.
		\end{enumerate}
		\subsection{Future directions}
			Even though the relative Ginzburg algebra is known to not be a strong smooth Calabi--Yau algebra, it is expected to admit a pre-Calabi--Yau structure in the sense of Kontsevich--Takeda--Vlassopoulos \cite{kontsevich2021pre}. This is consistent with the folklore conjecture that the partially wrapped Fukaya category should admit a pre-Calabi--Yau structure, which is motivated by work of Seidel \cite[Section 3.3]{seidel2012fukaya}. In view of the surgery formula for partially wrapped Fukaya categories \cite[Theorem 1.1]{asplund2020chekanov} we thus have the following.
			\begin{cnj}
				The Chekanov--Eliashberg dg-algebra of a singular Legendrian submanifold in the boundary of a stopped Weinstein manifold admits a geometrically defined pre-Calabi--Yau structure.
			\end{cnj}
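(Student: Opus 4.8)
The plan is to construct the pre-Calabi--Yau operations directly from counts of punctured holomorphic curves asymptotic to Reeb chords of the singular Legendrian, and then to verify the defining equation by analyzing the codimension-one boundary of the relevant compactified moduli spaces. Recall that an $n$-pre-Calabi--Yau structure on a dg-algebra $A$ in the sense of Kontsevich--Takeda--Vlassopoulos \cite{kontsevich2021pre} is a Maurer--Cartan element in the DG Lie algebra of cyclically invariant multilinear ``necklace'' operations on $A$; equivalently, it is a cyclically symmetric $A_\infty$-structure on the double $A \oplus A^\vee[n-1]$ restricting to the given $A_\infty$-structure on $A$. The Chekanov--Eliashberg differential and products already record the operations with a single cyclic string of inputs and one output, so the task is to promote these to the full necklace hierarchy and to make the promotion geometric rather than abstract.

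First I would define, for each admissible necklace type carrying several cyclically ordered groups of boundary punctures, a moduli space of rigid punctured holomorphic curves in the symplectization $\R \times \partial Z_0(Q)$ with boundary on the Lagrangian cylinder over the singular Legendrian, asymptotic at the positive and negative punctures to prescribed Reeb chords (with the frozen components treated as in the relative Chekanov--Eliashberg setup of \cite[Section 3.1]{asplund2020chekanov}). The signed count of rigid curves of a given necklace type is the candidate operation. Since this construction is local along the Legendrian, I would assemble the global operations from local models using the gluing formula for Chekanov--Eliashberg dg-algebras of singular Legendrians \cite[Theorem 2.43]{asplund2022tangle}; in the case at hand the three local pieces (the Legendrian unknot, the Hopf link, and two unknots meeting transversely, each with boundary) provide the elementary necklace operations that glue to the global ones.

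Second I would verify the pre-Calabi--Yau relations. As in standard symplectic-field-theoretic arguments, each relation asserts that the signed count of points in the boundary of a one-dimensional compactified moduli space vanishes; the codimension-one strata are two-level SFT breakings and disc bubbling along the Lagrangian boundary, and the combinatorics of how the cyclic groups of punctures are distributed between the two pieces must reproduce exactly the quadratic necklace bracket occurring in the Maurer--Cartan equation. The relative structure is imposed by requiring the operations to preserve the boundary dg-subalgebra $\mathcal B$, yielding a \emph{relative} pre-Calabi--Yau structure refining the strong relative smooth Calabi--Yau structure of \cref{cor:incl_ce}. For the pairs $(Q,F)$ this geometric structure should match, via \cref{thm:main}, the expected pre-Calabi--Yau structure on the relative Ginzburg algebra, giving an algebraic consistency check that builds on the known Calabi--Yau structure of the non-relative Ginzburg algebra \cite{keller2011deformed}.

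The main obstacle will be the analytic foundations of the multi-punctured moduli spaces: achieving transversality for curves carrying many boundary punctures and, crucially, choosing perturbation data invariant under the cyclic symmetry of each necklace type, so that the resulting operations are genuinely cyclically symmetric (hence pre-Calabi--Yau) rather than merely an $A_\infty$-structure on the double. Pinning down the exact identification of boundary strata with the necklace bracket, while simultaneously controlling the contributions of the singular points of $\varLambda(Q,F)$ and of the frozen/boundary strata, is the technical heart of the argument. This is precisely where the qualifier ``geometrically defined'' makes the conjecture substantially harder than abstractly transporting a pre-Calabi--Yau structure across the surgery equivalence $\Perf CE^\ast(\varLambda(Q,F);Z_0(Q)) \cong \mathcal W(X(Q,F);V(Q,F))$.
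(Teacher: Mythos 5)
The statement you are trying to prove is stated in the paper as a \emph{conjecture}: the paper offers no proof, only motivation (the folklore expectation that partially wrapped Fukaya categories carry pre-Calabi--Yau structures, transported through the surgery formula $\Perf CE^\ast(\varLambda(Q,F);Z_0(Q)) \cong \mathcal W(X(Q,F);V(Q,F))$). So there is nothing in the paper to compare your argument against, and your text should be read as a research program rather than a proof. As a program it identifies the expected strategy, but the steps you defer are not routine verifications --- they are the entire content of the conjecture, and one of them is more serious than you indicate.

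The concrete gap is in your first step: you never say which moduli spaces define the higher necklace operations. The Chekanov--Eliashberg differential counts rigid disks with one positive puncture and several negative punctures; a necklace operation with $k \geq 2$ cyclically ordered groups of inputs and outputs cannot be produced by such disks. One expects to need curves with several positive punctures, or several boundary components, or both (in the spirit of Legendrian duality and Rabinowitz-type complexes), and for these the standard compactness and index theory of Legendrian contact homology does not apply off the shelf: curves with multiple positive punctures have boundary breakings that do not organize into the quadratic necklace bracket without additional input, and in the singular/relative setting the behavior at the transverse intersection points of the frozen components is unexamined. Relatedly, your appeal to the gluing formula of \cite[Theorem 2.43]{asplund2022tangle} is only justified for the dg-algebra structure; that theorem says nothing about higher cyclic operations, and whether the colimit description of $CE^\ast$ extends to a colimit of pre-Calabi--Yau structures is itself an open question. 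Until the moduli spaces are specified precisely enough that their virtual dimensions, compactifications, and codimension-one strata can be written down, the Maurer--Cartan equation in your second step cannot even be formulated, let alone verified.
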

			Recently Ng \cite{ng2023} defined an $L_\infty$-structure on a commutative version of the Chekanov--Eliashberg dg-algebra of a Legendrian in $\R^3$ with loop space coefficients, which we expect to be a shadow of the conjectured pre-Calabi--Yau structure.

			Throughout this paper, every quiver come equipped with the trivial potential. An interesting problem that we hope to return to in the future is realizing the Ginzburg algebra of any $(Q,F)$ equipped with a non-trivial potential as a Chekanov--Eliashberg dg-algebra. The only example of such a relation known to date is due to Li \cite[Theorem 1.2]{li2019koszul}.
	\subsection*{Acknowledgments}
		The author is grateful to Georgios Dimitroglou Rizell and Noémie Legout for helpful comments on a draft of this paper and for explaining their work in progress, Merlin Christ for his guidance in the literature on relative Ginzburg algebras, Lenhard Ng for sharing a draft of recent work \cite{ng2023}, Joshua Sabloff for sharing his forthcoming results with Jiajie Ma and Zhenyi Chen for sharing and explaining his forthcoming results. Finally the author thanks the anonymous referee for valuable suggestions, improving the exposition of the paper. The author was supported by the Knut and Alice Wallenberg Foundation.
\section{Relative Ginzburg algebras}
	Let $\boldsymbol k$ be a field. Let $Q$ be a quiver (with trivial potential), and let $F \subset Q$ be a subquiver. Let $\overline Q_n(F)$ denote the graded quiver with vertex set $Q_0$ and arrow set $Q_1$ consisting of the following.
	\begin{itemize}
		\item An arrow $g \colon v\to w$ in degree $0$ for each $g \colon v \to w$ in $Q_1$.
		\item An arrow $g^\ast \colon w \to v$ in degree $2-n$ for each $g \colon v \to w$ in $Q_1 \smallsetminus F_1$.
		\item An arrow $h_v \colon v \to v$ in degree $1-n$ for each $v\in Q_0 \smallsetminus F_0$.
	\end{itemize}
	We will use the notation $\overline Q_n := \overline Q_n(\varnothing)$ and
	\[
		[x,y]^A_v := \sum_{A \ni e \colon v \to \bullet} xy - \sum_{A \ni e \colon \bullet \to v} yx ,
	\]
	where $A \subset Q_1$ is a subset.
	\begin{dfn}[$n$-dimensional Ginzburg algebra]
		The \emph{$n$-dimensional Ginzburg algebra} $\mathscr G^\ast_n(Q)$ of the quiver $Q$ is the path algebra of $\overline Q_n$ equipped with the differential $d$ given on arrows by
		\[
			dg = dg^\ast = 0, \quad dh_v = [g,g^\ast]^{Q_1}_v, \quad v \in Q_0,
		\]
		and extended to the whole path algebra by linearity and the Leibniz rule.
	\end{dfn}
	\begin{dfn}[$n$-dimensional relative Ginzburg algebra]\label{dfn:rel_ginzburg}
		The \emph{$n$-dimensional relative Ginzburg algebra} $\mathscr G^\ast_n(Q,F)$ of the tuple $(Q,F)$ is the path algebra of $\overline Q_n(F)$ equipped with the differential $d$ given on arrows by
		\[
			dg = dg^\ast = 0, \quad dh_v = [g,g^\ast]^{Q_1 \smallsetminus F_1}_v, \quad v \in Q_0 \smallsetminus F_0,
		\]
		and extended to the whole path algebra by linearity and the Leibniz rule.
	\end{dfn}
	\begin{lma}\label{lma:inclusion_rel_ginz}
		There is a map of dg-algebras $G \colon \mathscr G^\ast_{n-1}(F) \to \mathscr G^\ast_n(Q,F)$ that is defined on generators as $g \mapsto g$, $g^\ast \mapsto 0$, $h_v \mapsto [g,g^\ast]^{Q_1 \smallsetminus F_1}_v$ and extended to the whole of $\mathscr G^\ast_{n-1}(F)$ by linearity and multiplicativity.
	\end{lma}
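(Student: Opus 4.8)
The plan is to exploit the fact that both $\mathscr G^\ast_{n-1}(F)$ and $\mathscr G^\ast_n(Q,F)$ are path algebras, hence free tensor algebras over the semisimple rings of vertex idempotents $\boldsymbol k F_0$ and $\boldsymbol k Q_0$ respectively. An algebra homomorphism out of a tensor algebra is therefore uniquely determined by, and may be freely prescribed on, the generating arrows, subject only to compatibility with the idempotent decomposition. Since $F_0 \subset Q_0$ and $F_1 \subset Q_1$, the inclusion $\boldsymbol k F_0 \hookrightarrow \boldsymbol k Q_0 \subset \mathscr G^\ast_n(Q,F)$ supplies the required map on idempotents, and the prescribed values on arrows define a bimodule map into $\mathscr G^\ast_n(Q,F)$; the universal property of the tensor algebra then produces a unique algebra homomorphism $G$. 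Consequently the only two things that require verification are that $G$ preserves degrees and that it commutes with the differentials.

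For the grading I would simply read off degrees from the two definitions. In $\mathscr G^\ast_{n-1}(F) = \mathscr G^\ast_{n-1}(F,\varnothing)$ the generators $g$, $g^\ast$, $h_v$ sit in degrees $0$, $3-n$, $2-n$, whereas in $\mathscr G^\ast_n(Q,F)$ the generators $g$, $g^\ast$ and $h_v$ sit in degrees $0$, $2-n$, $1-n$. Then $|G(g)| = |g| = 0$; the element $G(g^\ast) = 0$ is homogeneous of every degree; and $G(h_v) = [g,g^\ast]^{Q_1 \smallsetminus F_1}_v$ is a sum of products $e e^\ast$ and $e^\ast e$ of degree $0 + (2-n) = 2-n = |h_v|$. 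One also checks that each generator lands in the correct idempotent corner: the arrow $g \colon v \to w$ maps to the arrow $g \colon v \to w$, while $[g,g^\ast]^{Q_1 \smallsetminus F_1}_v$ is a sum of loops at $v$, matching $h_v \colon v \to v$.

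The chain-map identity $d \circ G = G \circ d$ I would check on generators and promote to the whole algebra by the standard twisted-Leibniz argument: the difference $\phi := d \circ G - G \circ d$ is a $(G,G)$-derivation of degree $+1$, that is $\phi(aa') = \phi(a) G(a') + (-1)^{|a|} G(a)\phi(a')$, and it vanishes on vertex idempotents since $de_v = 0$; hence vanishing of $\phi$ on arrows forces $\phi \equiv 0$. On $g$ and $g^\ast$ both sides vanish because $dg = dg^\ast = 0$ in both algebras and $G(g^\ast)=0$. The one substantive case is $h_v$ for $v \in F_0$: on the source side $G(dh_v) = G\bigl([g,g^\ast]^{F_1}_v\bigr)$ vanishes because every summand contains a factor $e^\ast$ with $e \in F_1$, and $G$ sends all such $e^\ast$ to $0$; on the target side $d\bigl(G(h_v)\bigr) = d\bigl([g,g^\ast]^{Q_1 \smallsetminus F_1}_v\bigr)$ vanishes because each summand is a product of the $d$-closed generators $e$ and $e^\ast$ with $e \in Q_1 \smallsetminus F_1$, so the Leibniz rule gives zero. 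Thus both sides equal $0$.

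The verification is entirely formal, so I do not anticipate a serious obstacle; the only point demanding care is keeping the two commutator sets straight, namely $F_1$ in the source differential $dh_v$ versus $Q_1 \smallsetminus F_1$ in the defining formula for $G(h_v)$. These are exactly complementary in the way that makes both sides of the $h_v$ check vanish: the $F_1$-terms die under $G$ because their duals are sent to $0$, while the $(Q_1 \smallsetminus F_1)$-terms are already $d$-closed in the target. It is precisely the combination $g^\ast \mapsto 0$ on frozen arrows together with $h_v \mapsto [g,g^\ast]^{Q_1 \smallsetminus F_1}_v$ that renders $G$ compatible with the differentials.
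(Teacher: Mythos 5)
Your proof is correct and is essentially the paper's argument: the paper simply declares the lemma ``immediate from the definitions,'' and your write-up is the fully spelled-out version of that direct verification (universal property of the path algebra, degree check, and the cancellation of both sides of $d\circ G = G\circ d$ on $h_v$ via $g^\ast\mapsto 0$ on frozen duals and $d$-closedness of the non-frozen generators).
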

	\begin{proof}
		This is immediate from the definitions.
	\end{proof}
	The following result seems well-known to experts and implicitly alluded to in \cite[Section 7.2]{wu2023relative} and \cite[Section 8]{keller2021introduction} but we did not find an explicit reference in the literature.
	\begin{lma}\label{lma:ginzburg_rel_cy}
		The dg-algebra map $G \colon \mathscr G^\ast_{n-1}(F) \to \mathscr G^\ast_n(Q,F)$ defined in \cref{lma:inclusion_rel_ginz} admits a strong relative smooth $n$-Calabi--Yau structure.
	\end{lma}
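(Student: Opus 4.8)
The plan is to recognize $G$ as (a model for) the relative Calabi--Yau completion of the inclusion of path algebras, and then to invoke the general theorem that such completions carry a canonical relative left Calabi--Yau structure, in the sense of Brav--Dyckerhoff \cite{brav2019relative} (see also \cite{keller2011deformed,wu2023relative,keller2021introduction}). Concretely, set $\mathcal A = \boldsymbol k F$ and $\mathcal B = \boldsymbol k Q$, and let $f \colon \mathcal A \hookrightarrow \mathcal B$ be the map induced by $F \subset Q$. Both are path algebras of finite quivers concentrated in degree $0$, hence homologically smooth, and $f$ is a map of smooth dg-algebras. Thus the relative $n$-Calabi--Yau completion $\Pi_n(\mathcal B,\mathcal A)$ (in the sense of Yeung) is defined, together with a canonical dg-algebra map $\Pi_{n-1}(\mathcal A) \to \Pi_n(\mathcal B,\mathcal A)$ out of the absolute $(n-1)$-Calabi--Yau completion of $\mathcal A$, and this map is precisely the object that is asserted to carry a relative left $n$-Calabi--Yau structure.

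The heart of the argument is the identification of both sides with the Ginzburg algebras. On the source, the absolute $(n-1)$-completion $\Pi_{n-1}(\boldsymbol k F) = T_{\boldsymbol k F}(\Theta_{\boldsymbol k F}[n-2])$, with $\Theta_{\boldsymbol k F} = \Rhom_{\boldsymbol k F^e}(\boldsymbol k F, \boldsymbol k F^e)$, is identified with $\mathscr G^\ast_{n-1}(F)$ by Keller's computation for path algebras \cite{keller2011deformed}, the two terms of the standard bimodule resolution producing the generators $g^\ast$ and $h_v$. On the target I would compute the relative inverse dualizing bimodule $\Theta_f$, fitting in a cofiber sequence
\[
	\mathcal B \otimes^{\mathbf L}_{\mathcal A} \Theta_{\mathcal A} \otimes^{\mathbf L}_{\mathcal A} \mathcal B \longrightarrow \Theta_{\mathcal B} \longrightarrow \Theta_f .
\]
For quivers the comparison map is a termwise split injection, so $\Theta_f$ is quasi-isomorphic to its cokernel, whose generators are exactly the non-frozen arrows $Q_1 \smallsetminus F_1$ and non-frozen vertices $Q_0 \smallsetminus F_0$. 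After dualizing and shifting, $\Theta_f[n-1]$ contributes precisely the $g^\ast$ for $g \in Q_1 \smallsetminus F_1$ in degree $2-n$ and the $h_v$ for $v \in Q_0 \smallsetminus F_0$ in degree $1-n$, so that $\Pi_n(\mathcal B,\mathcal A) = T_{\boldsymbol k Q}(\Theta_f[n-1])$ has underlying graded algebra the path algebra of $\overline Q_n(F)$; tracing the Koszul-dual differential on the inverse dualizing bimodule yields $dh_v = [g,g^\ast]^{Q_1 \smallsetminus F_1}_v$, matching \cref{dfn:rel_ginzburg} and giving $\Pi_n(\mathcal B,\mathcal A) \cong \mathscr G^\ast_n(Q,F)$.

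To conclude I would verify that the canonical structure map $\Pi_{n-1}(\mathcal A) \to \Pi_n(\mathcal B,\mathcal A)$ agrees with $G$: it restricts to the identity on $\boldsymbol k F \subset \boldsymbol k Q$, sends each frozen dual $g^\ast$ to its image in $\Theta_f$ (which vanishes, as the frozen arrows are absorbed into $\mathcal A$), and sends each frozen loop $h_v$ to the class it bounds under the connecting map of the cofiber sequence, namely $[g,g^\ast]^{Q_1 \smallsetminus F_1}_v$ --- exactly the formula of \cref{lma:inclusion_rel_ginz}, and the degrees match since both sides lie in degree $2-n$. Granting these identifications, the statement follows immediately from the theorem that a relative Calabi--Yau completion carries a canonical relative left $n$-Calabi--Yau structure, and it is \emph{strong} because the defining class is produced directly as an element of relative negative cyclic homology rather than merely of relative Hochschild homology, in the sense of \cite[(1.13)]{brav2019relative}. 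I expect the main obstacle to be the second step: correctly computing $\Theta_f$ and matching both the internal differential and the structure map against the explicit combinatorial formulas, since the signs, degree shifts, and Koszul-dual differential are where the bookkeeping can go wrong. The existence of the Calabi--Yau structure itself is then a black-box application of the general theory.
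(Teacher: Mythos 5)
Your proposal follows essentially the same route as the paper: identify $G$ with the canonical map from the Calabi--Yau completion of $\boldsymbol kF$ into the relative $n$-Calabi--Yau completion $\varPi_n(\boldsymbol kQ,\boldsymbol kF)$, invoke the theorem of Yeung, Bozec--Calaque--Scherotzke and Wu that this map carries a canonical strong relative smooth $n$-Calabi--Yau structure, and reduce the remaining work to identifying the (reduced) relative completion with $\mathscr G^\ast_n(Q,F)$ via the relative inverse dualizing bimodule. The paper outsources the explicit computation of $\Theta_f$ and the reduction to the cokernel to Wu's description, where you sketch it directly, but the argument is the same.
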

	\begin{proof}
		It is well-known that $\mathscr G^\ast_{n-1}(F) \cong \varPi_n(\boldsymbol kF)$ by \cite[Theorem 6.3]{keller2011deformed}, where $\varPi_n(\boldsymbol kF)$ denotes the $n$-Calabi--Yau completion of the path algebra of $F$. Yeung defined the relative $n$-Calabi--Yau completion $\varPi_n(\boldsymbol kQ, \boldsymbol kF)$ in \cite[Section 7]{yeung2016relative}, and Wu defined a reduced version $\varPi_n^{\text{red}}(\boldsymbol kQ, \boldsymbol kF)$ in \cite[Section 3.6]{wu2023relative}, which is quasi-isomorphic to $\varPi_n(\boldsymbol kQ, \boldsymbol kF)$ by \cite[Proposition 3.18]{wu2023relative}. It was proven by Yeung \cite[Theorem 7.1]{yeung2016relative}, Bozec--Calaque--Scherotzke \cite[Corollary 5.24]{bozec2020relative} and Wu \cite[Proposition 3.18]{wu2023relative} that the inclusion $\varPi_n(\boldsymbol kF) \hookrightarrow \varPi_n(\boldsymbol kQ, \boldsymbol kF)$ admits a strong relative smooth $n$-Calabi--Yau structure.

		Thus it suffices to show that $\varPi_n^{\text{red}}(\boldsymbol kQ, \boldsymbol kF) \cong \mathscr G_n^\ast(Q,F)$, which is well-known to experts. In the same way that the $n$-Calabi--Yau completion in \cite{keller2011deformed} is defined as the tensor algebra of a shift of the inverse dualizing bimodule, the relative $n$-Calabi--Yau completion is defined as the tensor algebra of a shift of the \emph{relative} inverse dualizing bimodule. An explicit description of the reduced version is described in \cite[p.\@ 37]{wu2023relative}, and the underlying module is generated by the dual arrows $g^\ast$ in degree $2-n$ for every $g \in Q_1 \smallsetminus F_1$ and loops $h_v$ in degree $1-n$ for every $v \in Q_0 \smallsetminus F_0$. A similar description for $n = 2$ is found in \cite[Section 5.3.2]{bozec2020relative} but holds for general $n$ in a similar manner. This finishes the proof.
	\end{proof}	
	\begin{rmk}
		The dg-algebra $\varPi_n(\boldsymbol kQ, \boldsymbol kF)$ was also defined in \cite{keller2021introduction} under the name \emph{relative derived preprojective algebra}.
	\end{rmk}
\section{Stopped Weinstein manifold associated to a quiver and a frozen subquiver}
	\subsection{Weinstein manifolds and stops}
		Let us briefly recall some basic geometric definitions.
		\begin{dfn}[Weinstein hypersurface]\label{dfn:weinstein_hypersurface}
			A \emph{Weinstein hypersurface} is a Weinstein embedding $(V^{2n-2}, \lambda|_V) \hookrightarrow (X \smallsetminus \Core X, \lambda)$ such that the induced map $V \to \partial X$ is an embedding. We will denote Weinstein embeddings by $V \hookrightarrow \partial X$.
		\end{dfn}
		\begin{dfn}[Weinstein pair]
			A \emph{Weinstein pair} is a tuple $(X^{2n},V^{2n-2})$ consisting of a Weinstein manifold $X$ and a Weinstein hypersurface $V \hookrightarrow \partial X$.
		\end{dfn}
		\begin{dfn}[Stop associated to a Weinstein hypersurface]
			Let $(X,V)$ be a Weinstein pair. The \emph{stop associated to the Weinstein hypersurface $V \hookrightarrow \partial X$} is the Weinstein cobordism $\sigma_V$ obtained by gluing the Weinstein cobordism $(V \times D_\varepsilon T^\ast \varDelta^1, \lambda_V + 2xdy+ydx)$ to $\R \times (V \times \R)$ along $V \times (-\varepsilon,\varepsilon) \hookrightarrow \partial(\R \times (V \times \R))$.
		\end{dfn}
		\begin{dfn}[Weinstein manifold stopped at a Weinstein hypersurface]\label{dfn:stopped_weinstein_manifold}
			Let $(X,V)$ be a Weinstein pair. The \emph{Weinstein manifold $X$ stopped at $V$} is the Weinstein cobordism obtained by gluing $\sigma_V$ to $X$ along $V \times (-\varepsilon,\varepsilon)$.
		\end{dfn}
		\begin{rmk}
			What we call a ``stop'' deviates slightly from the original definition in \cite{sylvan2016partially}. Our usage of the word stop originates from its usage in \cite{ekholm2017duality,asplund2021fiber,asplund2020chekanov}.
		\end{rmk}
		\begin{dfn}[Weinstein manifold stopped at a singular Legendrian]\label{dfn:stopped_weinstein_manifold_leg}
			Let $X$ be a Weinstein manifold and $\varLambda \subset \partial X$ a singular Legendrian. We define the \emph{Weinstein manifold $X$ stopped at $\varLambda$} to be the Weinstein manifold $X$ stopped at a Weinstein neighborhood $T^\ast \varLambda$ of $\varLambda$.
		\end{dfn}
		\begin{dfn}[Legendrian relative to Weinstein pair]\label{dfn:leg_rel}
			Let $(X,V)$ be a Weinstein pair. A Legendrian submanifold relative to $V$ is an embedded Legendrian submanifold-with-boundary $\varLambda \subset \partial X$ such that $\partial \varLambda \subset \partial V$ is Legendrian.
		\end{dfn}
	\subsection{A simplicial decomposition}\label{sec:simpl_decomp}
		Let $Q$ be a quiver and let $F \subset Q$ be a subquiver called the \emph{frozen subquiver}. Recall the definition of the pair $(Z_0(Q),\varLambda(Q,F))$ in \cref{sec:intro}. We pick a Weinstein neighborhood of each frozen component of $\varLambda(Q,F)$ together with a handle decomposition that consists of one Weinstein $0$-handle per frozen vertex of the component and one Weinstein $1$-handle per frozen edge of the component. Then let $P_0(F)$ denote the union of the subcritical parts of each of these Weinstein neighborhoods. The purpose of this section is to describe a simplicial decomposition of $(Z_0(Q),P_0(F))$ (see \cite[Definition 2.15]{asplund2021simplicial}) and a Legendrian submanifold $\boldsymbol \varLambda(Q,F)$ relative to this simplicial decomposition (see \cite[Definition 2.38]{asplund2022tangle}). The construction closely follows \cite[Section 4.3]{asplund2021simplicial}.

		Let $C$ be the graph that is obtained by taking the underlying graph of $Q$, and adding one extra vertex in the middle of each edge (thus splitting the edge into two). For convenience, we consider a partition $C_0 = V \cup F_0 \cup E \cup F_1$ where $V$ corresponds to elements of $Q_0 \smallsetminus F_0$ (non-frozen vertices) and $E$ corresponds to elements in $Q_1 \smallsetminus F_1$. The set $E \cup F_1$ consists of the newly added vertices (edge vertices), see \cref{fig:graph_simpl_complex}.
		\begin{figure}[!htb]
			\centering
			\includegraphics[scale=1.3]{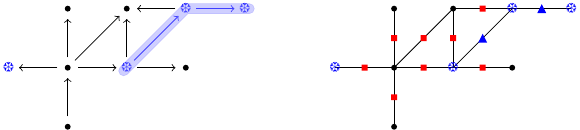}

			\vspace{5mm}

			\includegraphics[scale=1.3]{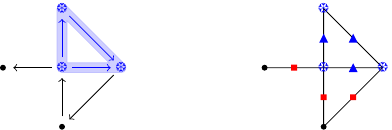}
			\caption{Left: Plumbing quivers $Q$ with a set $F_0$ of frozen vertices and a set $F_1$ of frozen edges. Right: The corresponding graph $C$ with vertex set $V \cup F_0 \cup E \cup F_1$. The vertices represented by black dots $\bullet$ belong to $V$, the vertices represented by blue snowflakes {\color{blue}\SnowflakeChevronBold} belong to $F_0$, the vertices represented by red squares $\textcolor{red}{\blacksquare}$ belong to $E$ and the vertices represented by blue triangles {\color{blue}\UParrow} belong to $F_1$.}\label{fig:graph_simpl_complex}
		\end{figure}

		A simplicial decomposition of $(Z_0(Q),P_0(F))$ is a tuple $((C,C'),(\boldsymbol V, \boldsymbol V'))$ with $C$ as above and $C' = F_0 \cup F_1$ (where $F_0 \cup F_1$ is viewed as a subset of the vertex set $C_0$). We call $e \in C_1$ a \emph{frozen edge} if $e$ is adjacent to any frozen edge vertex. All other edges are called non-frozen.

		Both $\boldsymbol V$ and $\boldsymbol V'$ are sets consisting of Weinstein manifolds and Weinstein hypersurfaces of varying dimensions. For simplicity we denote by $B^{2n}_{\text{std}}$ the Weinstein manifold consisting of $B^{2n}$ equipped with its standard Liouville one-form $\lambda = \frac 12 \sum_{i=1}^{n} (x_i dy_i - y_i dx_i)$.
		The set $\boldsymbol V$ consists of
		\begin{enumerate}
			\item one copy of $V^{2n-2}_{e} := B^{2n-2}_{\text{std}}$ for each edge $e \in C_1$;
			\item one copy of $V^{2n}_v := B^{2n}_{\text{std}}$ for each vertex $v \in C_0$;
			\item one Weinstein hypersurface $\iota_v\colon\bigsqcup_{C_1 \ni e \supset v} V^{2n-1}_e \hookrightarrow \partial V^{2n}_v$ for each vertex $v \in C_0$.
		\end{enumerate}
		The set $\boldsymbol V'$ consists of 
		\begin{enumerate}
			\item one copy of $(V')^{2n-2}_v := B^{2n-2}_{\text{std}}$ for each frozen vertex and for each frozen edge vertex $v \in F_0 \cup F_1 \subset C_0$;
			\item one copy of $(V')^{2n-4}_{e} := B^{2n-4}_{\text{std}}$ for each frozen edge  $e\in F_1$;
			\item one Weinstein hypersurface $\bigsqcup_{C_1 \ni e \supset v} (V')^{2n-4}_{e} \hookrightarrow \partial (V')^{2n-2}_v$ for each frozen vertex $v \in F_0$ and frozen edge vertex $v \in F_1$.
		\end{enumerate}
		We have that $(C,\boldsymbol V)$ is a simplicial decomposition of $Z_0(Q)$, and $(C', \boldsymbol V')$ is a simplicial decomposition of $P_0$. To complete the construction, we have a hypersurface inclusion $(C', \boldsymbol V') \hookrightarrow (C, \boldsymbol V)$ (see \cite[Definition 2.12]{asplund2022tangle}) which consists of
		\begin{enumerate}
			\item the inclusion $C' = F \hookrightarrow C$ of the (underlying graph of the) frozen subquiver into $C$;
			\item one Weinstein hypersurface $(V')^{2n-4}_{e} \hookrightarrow V^{2n-2}_e$ for each edge $e\in C_1$;
			\item one Weinstein hypersurface
			\[
				(V')_v^{2n-2} \#_{(V')_{e}^{2n-4}} V_e^{2n-2} \hookrightarrow \partial V_v^{2n}
			\]
			for each pair of a frozen (edge) vertex $v \in F_0 \cup F_1$ and an edge $e \in C_1$ adjacent to $v$.
		\end{enumerate}
		An example of the simplicial decomposition of $(Z_0(Q),P_0(F))$ is depicted in \cref{fig:simp_decomp_pair}.
		\begin{figure}[!htb]
			\centering
			\includegraphics[scale=1.5]{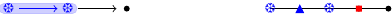}

			\vspace{5mm}

			\includegraphics[width=0.9\textwidth]{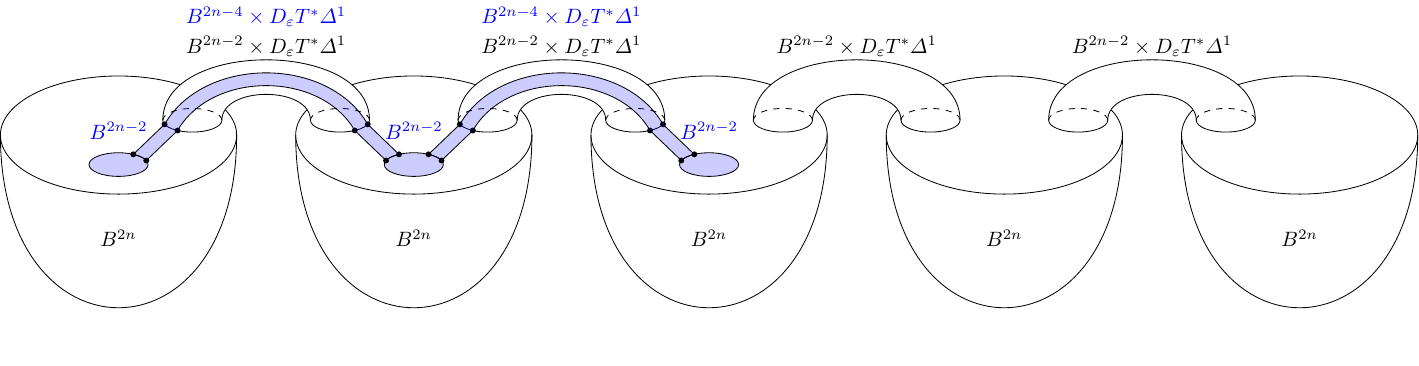}
			\caption{Top left: A quiver with a frozen subquiver $(Q,F)$. Top right: The simplicial complex $C$ associated to $(Q,F)$. Bottom: The simplicial decomposition of the pair $(Z_0(Q),P_0(F))$.}\label{fig:simp_decomp_pair}
		\end{figure}
		We now define a Legendrian submanifold relative to $((C',C), (\boldsymbol V,\boldsymbol V'))$ denoted by $\boldsymbol \varLambda(Q,F)$ (see \cite[Definition 2.38]{asplund2022tangle}) as follows:
		\begin{enumerate}
			\item For each non-frozen edge vertex $v\in E$ let $\varLambda_v$ be the $(n-1)$-dimensional Hopf link $\varLambda_1 \cup \varLambda_2$ with boundary such that $\partial \varLambda_i \subset \partial V^{2n-2}_{e_i}$ is the $(n-2)$-dimensional Legendrian unknot, where $e_1,e_2 \in C_1$ are the two edges adjacent to $v$, see \cref{fig:locally_at_edge_vertex}.
			\item For each frozen edge vertex $v\in F_1$ let $\varLambda_v$ be two $(n-1)$-dimensional Legendrian disks $\varLambda_1 \cup \varLambda_2$ with boundary in $\partial(V^{2n-2}_{e_1} \#_{(V')^{2n-4}_{e_1}} (V')^{2n-2}_v \#_{(V')^{2n-4}_{e_2}} V^{2n-2}_{e_2})$ such that
			\begin{enumerate}
				\item $\partial \varLambda_i \cap \partial V^{2n-2}_{e_i} \subset \partial V^{2n-2}_{e_i}$ is the $(n-2)$-dimensional Legendrian unknot with boundary a $(n-3)$-dimensional Legendrian unknot in $\partial (V')^{2n-2}_{e_i}$, where $e_1,e_2 \in C_1$ are the two edges adjacent to $v$, and
				\item $(\partial \varLambda_1 \cup \partial \varLambda_2) \cap \partial (V')^{2n-2}_{v} \subset \partial (V')^{2n-2}_v$ is the $(n-2)$-dimensional Hopf link with two boundary components, one in each $\partial (V')^{2n-4}_{e_i}$ looking like the $(n-3)$-dimensional unknot, where $e_1,e_2 \in C_1$ are the two edges adjacent to $v$
			\end{enumerate}
			see \cref{fig:locally_at_frozen_edge_vertex}.
			\item For each $v \in V$, let $\varLambda_v$ be the $(n-1)$-dimensional Legendrian unknot with $k$ open disks removed, where $k$ is the valency of the vertex $v$ such that each of the $k$ components of $\partial \varLambda_v$ is the $(n-2)$-dimensional Legendrian unknot in $\partial V^{2n-2}_{e_i}$ where $e_1,\ldots,e_k \in C_1$ are the $k$ edges that are adjacent to $v$, see \cref{fig:locally_at_vertex}.
			\item For each $v \in F$, let $\varLambda_v$ is the $(n-1)$-dimensional Legendrian unknot with one boundary component being a $(n-2)$-dimensional Legendrian unknot in $\partial B^{2n-2}$ for each non-frozen edge adjacent to $v$. 

			The new copy of $B^{2n-2}$ coming from the fact that $v \in F$ (see the definition of $\boldsymbol V'$) is connected via a Weinstein $1$-handle to every other $B^{2n-2}$ corresponding to frozen edges of $C$ that are adjacent to $v$. The boundary $\varLambda_v$ in this new $B^{2n-2}$ is a $(n-2)$-dimensional unknot with some number of disks removed (corresponding to the number of frozen edges adjacent to $v$), see \cref{fig:locally_at_frozen_vertex}.
		\end{enumerate}
		\begin{figure}[!htb]
			\centering
			\includegraphics[scale=1.5]{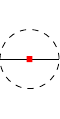}
			\hspace{15mm}
			\includegraphics{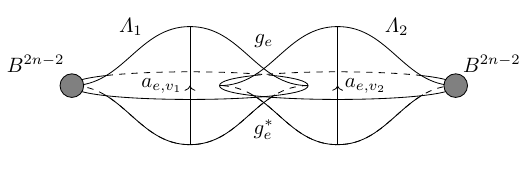}
			\includegraphics{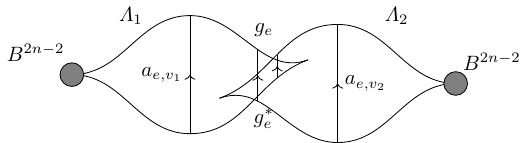}
			\caption{Top left: Local depiction of a vertex $v\in E$. Top right: Two copies of $B^{2n-2}$ with $\varLambda_{v}$ which becomes the $(n-1)$-dimensional Hopf link with boundary in $\partial(B^{2n-2} \sqcup B^{2n-2})$ after a small perturbation in a Darboux chart. Bottom: A slice of $\varLambda_{v}$ in generic position after a small perturbation, where the Reeb chords $g_e$ and $g_e^\ast$ are visible.}\label{fig:locally_at_edge_vertex}
		\end{figure}
		\begin{figure}[!htb]
			\centering
			\includegraphics[scale=1.5]{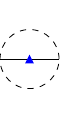}
			\hspace{15mm}
			\includegraphics{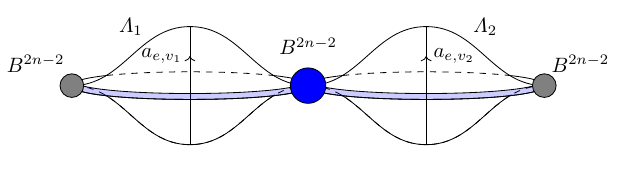}

			\includegraphics{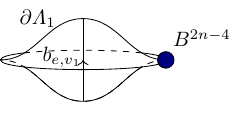}
			\hspace{2mm}
			\includegraphics{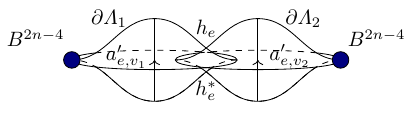}
			\hspace{2mm}
			\includegraphics{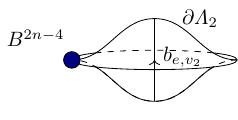}
			\caption{Top left: Local depiction of a vertex $v\in F_1$. Top right: Three copies of $B^{2n-2}$ with $\varLambda_{v}$ being two $(n-1)$-dimensional Legendrian disks with boundary in $\partial(B^{2n-2} \#_{B^{2n-4}} B^{2n-2} \#_{B^{2n-4}} B^{2n-2})$ in a Darboux chart. Bottom left: The boundary of $\varLambda_1$ in the leftmost copy of $B^{2n-2}$. Bottom middle: The boundary of $\varLambda_1 \cup \varLambda_2$ in the middle blue copy of $B^{2n-2}$. Bottom right: The boundary of $\varLambda_2$ in the rightmost copy of $B^{2n-2}$.}\label{fig:locally_at_frozen_edge_vertex}
		\end{figure}
		\begin{figure}[!htb]
			\centering
			\includegraphics[scale=1.5]{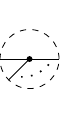}
			\hspace{30mm}
			\includegraphics{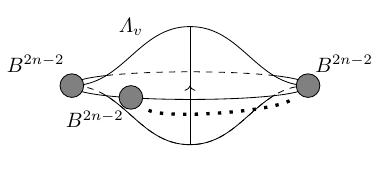}
			\caption{Left: Local depiction of a vertex $v\in V$ with valency $k$. Right: $k$ copies of $B^{2n-2}$ with $\varLambda_{v}$ being the $(n-1)$-dimensional Legendrian unknot with boundary in $\partial \left(\bigsqcup_{i=1}^k B^{2n-2}\right)$ in a Darboux chart.}\label{fig:locally_at_vertex}
		\end{figure}
		\begin{figure}[!htb]
			\centering
			\includegraphics[scale=1.5]{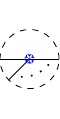}
			\hspace{30mm}
			\includegraphics{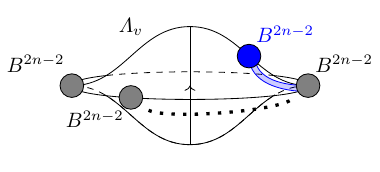}
			\caption{Left: Local depiction of a frozen vertex $v\in F$ with valency $k$. Right: $k+1$ copies of $B^{2n-2}$ with $\varLambda_{v}$ being the $(n-1)$-dimensional Legendrian unknot with boundary in $\partial \left(\bigsqcup_{\text{adjacent non-frozen edges}} B^{2n-2} \sqcup \left({\color{blue}B^{2n-2}} \#_{B^{2n-4}} B^{2n-2} \#_{B^{2n-4}} \cdots \#_{B^{2n-4}} B^{2n-2}\right)\right)$ in a Darboux chart.}\label{fig:locally_at_frozen_vertex}
		\end{figure}
		The subcritical part $Z_0(Q)$ is now constructed from $(C,\boldsymbol V)$ via gluing as described in \cite[Section 1.3]{asplund2021simplicial}, and similarly $(Z_0(Q),P_0(F))$ is constructed from $((C,C'),(\boldsymbol V,\boldsymbol V'))$ via gluing as described in \cite[Section 2.1]{asplund2022tangle}.
\section{Proof of the main theorem}
	In \cref{sec:simpl_decomp} we defined a simplicial decomposition of the subcritical Weinstein pair $(Z_0(Q),P_0(F))$, and a Legendrian $\boldsymbol \varLambda(Q,F)$ relative to the simplicial decomposition. In this section we compute the Chekanov--Eliashberg dg-algebra of $\boldsymbol \varLambda(Q,F)$ which by \cite[Definition 2.41]{asplund2022tangle} is the Chekanov--Eliashberg dg-algebra of the singular Legendrian $\varLambda(Q,F) \subset \partial Z_0(Q)$.
	\subsection{Computation of the Chekanov--Eliashberg dg-algebra}\label{sec:computation_ce_dga}
		We denote by $CE^\ast(\varLambda;Z)$ the Chekanov--Eliashberg dg-algebra of $\varLambda$ computed in the boundary of the Weinstein manifold $Z$.
		\begin{lma}\label{lma:dga_in_chart}
			Let $X_0$ be a subcritical stopped Weinstein manifold and $\varLambda \subset \partial X_0$ a Legendrian submanifold such that $\varLambda$ is contained in an arbitrarily small Darboux chart $D \subset \partial X_0$. Then there is a quasi-isomorphism $CE^\ast(\varLambda;\partial X_0) \cong CE^\ast(\varLambda;D)$.
		\end{lma}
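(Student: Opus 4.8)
The plan is to show that, for compatible choices of auxiliary data, the two dg-algebras are identified on the nose, and then to invoke invariance of the Chekanov--Eliashberg dg-algebra under such choices to upgrade this to the asserted quasi-isomorphism. First I would fix a contact form $\alpha$ on $\partial X_0$ whose restriction to $D$ is the standard one, together with a cylindrical almost complex structure $J$ on the symplectization $\R\times\partial X_0$ that agrees with the standard one on $\R\times D$. Since every Reeb chord of $\varLambda$ is determined by $\alpha|_D$ and lies in $D$, the dg-algebras $CE^\ast(\varLambda;\partial X_0)$ and $CE^\ast(\varLambda;D)$ have the \emph{same} generators and the same underlying graded algebra; the content of the lemma is therefore that the two differentials agree, which will follow once all rigid $J$-holomorphic disks are confined to $\R\times D$.

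Next I would arrange that the Reeb chords of $\varLambda$ are arbitrarily short. Pick a fixed Darboux chart $D'$ with $\overline D\subset D'\subset\partial X_0$, and use the conformal scaling symmetry of the standard contact form, cut off to be supported in $D'$, to produce a Legendrian isotopy after which every Reeb chord of $\varLambda$ has action less than a prescribed $A>0$. By Legendrian isotopy invariance this changes $CE^\ast(\varLambda;\partial X_0)$ and $CE^\ast(\varLambda;D)$ only by an isomorphism, so we may take $A$ as small as we like. Because the boundary of any disk contributing to the differential lies on the Legendrian cylinder $\R\times\varLambda$, where $\alpha$ vanishes, the $d\alpha$-energy of such a disk is bounded above by the total action of its positive asymptotics, hence by $NA$ where $N$ is the number of positive punctures.

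The heart of the argument is confinement. By the monotonicity lemma applied in the fixed standard region $\R\times D'$ there is a constant $\hbar=\hbar(D')>0$ such that any $J$-holomorphic disk with boundary on $\R\times\varLambda\subset\R\times D$ whose image meets $\R\times\partial\overline D$ has energy at least $\hbar$. Choosing $A$ small enough that $NA<\hbar$ forces every such disk to remain inside $\R\times D$. The remaining danger is a disk escaping the standard region $\R\times D'$ altogether through the non-standard, surgered part of $\partial X_0$, and this is where subcriticality of $X_0$ enters: since the skeleton of $X_0$ is isotropic of dimension at most $n-1$, for generic data the relevant moduli spaces contain no disks meeting the non-standard locus, so no disk leaves the region where $\alpha$ and $J$ are standard and the monotonicity estimate is valid. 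Combined with SFT compactness, which guarantees that any breaking produces only asymptotics at Reeb chords of $\varLambda$ inside $D$, this shows that every rigid disk counted by $CE^\ast(\varLambda;\partial X_0)$ is contained in $\R\times D$.

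Finally, on $\R\times D$ the Floer data defining $CE^\ast(\varLambda;\partial X_0)$ coincides with that defining $CE^\ast(\varLambda;D)$, so the two differentials agree on the common generators and we obtain an isomorphism of dg-algebras for these compatible choices; invariance of the Chekanov--Eliashberg dg-algebra under the auxiliary choices then yields the asserted quasi-isomorphism. I expect the confinement step to be the main obstacle: one must make the monotonicity estimate precise in the symplectization with Legendrian boundary conditions, and, more delicately, use subcriticality correctly to exclude disk components and broken configurations that pass through the non-standard part of $\partial X_0$.
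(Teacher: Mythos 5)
There is a genuine gap at the very first step: you assert that every Reeb chord of $\varLambda$ ``is determined by $\alpha|_D$ and lies in $D$,'' and hence that $CE^\ast(\varLambda;\partial X_0)$ and $CE^\ast(\varLambda;D)$ have the same generators, reducing the lemma to a statement about differentials. This is false in general. A Reeb chord only needs to begin and end on $\varLambda$; the Reeb flow is defined on all of $\partial X_0$ and can carry a point of $\varLambda$ out of $D$ and back. Already in the simplest case $X_0 = B^{2n}$, where $\partial X_0 = S^{2n-1}$ carries the Hopf flow, a Legendrian in a small Darboux ball has ``external'' chords of length close to every multiple of the orbit period, so the two dg-algebras do \emph{not} have the same generators and the statement really is only a quasi-isomorphism, not an isomorphism on the nose. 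Controlling these external chords is the actual content of the lemma: the paper's proof distinguishes internal from external chords and, for each action bound $A$, shrinks the attaching regions of the subcritical handles and stops to size $\varepsilon=\varepsilon(A)$ so that external chords of length less than $A$ avoid them, reducing to the known model situation; your conformal rescaling of $\varLambda$ only shortens the internal chords and does nothing about the external ones.

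A secondary problem is where you invoke subcriticality. You use it to claim that, for generic data, no holomorphic disk meets the ``non-standard locus'' because the skeleton is isotropic of dimension at most $n-1$. That is not the relevant mechanism (the handle regions are open sets of full dimension, and a transversality count against the skeleton does not confine disks); moreover, once a disk asymptotic to an internal chord is known to have small energy, the monotonicity estimate in the standard shell between $D$ and $D'$ already confines it, so this extra step is both unnecessary and incorrectly argued. In the paper, subcriticality enters through the Reeb dynamics — controlling chords that pass through small handles and stops — together with the monotonicity/energy argument of \cite[Lemmas 5.9 and 5.10]{ekholm2015legendrian} for the disks. Your confinement-by-monotonicity step (with $N=1$ positive puncture for the Chekanov--Eliashberg differential) is essentially the right second half of the argument, but the proposal as written assumes away the first half.
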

		\begin{proof}
			This follows from a well-known energy argument and follows the same idea as in the proof of \cite[Lemma 2.37]{asplund2021simplicial} which we give an outline of here. Without loss of generality we may assume that the Darboux chart $D$ is disjoint from every subcritical Weinstein handle and stop for dimension reasons.

			Let $A,\varepsilon \in \R_{>0}$. Denote by $X_0^\varepsilon$ the subcritical stopped Weinstein manifold $X_0$ where all its subcritical handles and stops have attaching regions of size at most $\varepsilon$ (see \cite[Definition 2.11]{asplund2021simplicial}). We call any Reeb chord of $\varLambda$ that leaves $D$ external, and all other Reeb chord internal. For any external Reeb chord $c$ of $\varLambda$ of length $\ell(c) < A$ we can find $\varepsilon$ small enough so that $c$ is disjoint from every subcritical Weinstein handle and stop of $X_0^\varepsilon$ (cf.\@ \cite[Lemma 2.24]{asplund2021simplicial}). Finally we have that $J$-holomorphic disks in $\R \times \partial X_0^\varepsilon$ asymptotic to an internal Reeb chord at infinity do not escape $\R \times D$ which follows from a monotonicity argument from which we conclude the result, see \cite[Lemmas 5.9 and 5.10]{ekholm2015legendrian}.
		\end{proof}

		The main tool we use to compute the Chekanov--Eliashberg dg-algebra of $\boldsymbol \varLambda(Q,F)$ is the following gluing formula.
		\begin{lma}\label{lma:colimit_loop}
			There is a quasi-isomorphism of dg-algebras
			\[
				CE^\ast(\boldsymbol \varLambda(Q,F);(Z_0(Q),P_0(F))) \cong \colim_{x \in C_0 \cup C_1} \mathcal A_x =: \mathcal A,
			\]
			where $\mathcal A_x$ is a dg-algebra generated by the Reeb chords of $\varLambda_x$ that are completely contained in a Darboux chart, where $\varLambda_x := \partial \varLambda_v$ for $x \in C_1$ and $v \in C_0$ is any vertex adjacent to $x$ (see the definition of $\boldsymbol \varLambda(Q,F)$ in \cref{sec:simpl_decomp}). The differential on $\mathcal A_x$ is the one induced from $CE^\ast(\boldsymbol \varLambda(Q,F);(Z_0(Q),P_0(F)))$.
		\end{lma}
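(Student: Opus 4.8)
The plan is to deduce the statement from the general gluing formula for Chekanov--Eliashberg dg-algebras of singular Legendrians relative to a simplicial decomposition, \cite[Theorem 2.43]{asplund2022tangle}, applied to the decomposition $((C,C'),(\boldsymbol V,\boldsymbol V'))$ of the subcritical Weinstein pair $(Z_0(Q),P_0(F))$ and the relative Legendrian $\boldsymbol\varLambda(Q,F)$ built in \cref{sec:simpl_decomp}. First I would check that the hypotheses of that theorem are met, which is immediate by construction: $((C,C'),(\boldsymbol V,\boldsymbol V'))$ is a simplicial decomposition and $\boldsymbol\varLambda(Q,F)$ is Legendrian relative to it. The theorem then identifies $CE^\ast(\boldsymbol\varLambda(Q,F);(Z_0(Q),P_0(F)))$, up to quasi-isomorphism, with the colimit over $C_0 \cup C_1$ of the local dg-algebras $CE^\ast(\varLambda_x;V_x)$---where $V_x = V_v^{2n}$ at a vertex $v$ and $V_x = V_e^{2n-2}$ at an edge $e$, and $\varLambda_x = \varLambda_v$ respectively $\varLambda_x = \partial\varLambda_v$---with structure maps the boundary-restriction maps $CE^\ast(\partial\varLambda_v;V_e) \to CE^\ast(\varLambda_v;V_v)$ attached to each incidence $e \subset v$. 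Since $\boldsymbol\varLambda(Q,F)$ is relative, the edge piece $\partial\varLambda_v \subset \partial V_e$ is independent of the adjacent vertex, so these data indeed assemble into a single diagram indexed by $C_0 \cup C_1$.

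Next I would replace each local dg-algebra by its Darboux-chart model. By construction every $\varLambda_v$, and likewise every boundary $\partial\varLambda_v$, is contained in an arbitrarily small Darboux chart $D \subset \partial V_x$; hence \cref{lma:dga_in_chart} provides a quasi-isomorphism $CE^\ast(\varLambda_x;V_x) \cong CE^\ast(\varLambda_x;D) =: \mathcal A_x$, where $\mathcal A_x$ is freely generated by the Reeb chords of $\varLambda_x$ lying entirely inside $D$ and carries the differential induced by restriction from the global dg-algebra. To conclude I would verify that these quasi-isomorphisms are natural with respect to the boundary-restriction maps, so that they assemble into a quasi-isomorphism of diagrams, which then induces a quasi-isomorphism of colimits onto $\mathcal A := \colim_{x \in C_0 \cup C_1}\mathcal A_x$.

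The main obstacle is this last compatibility, together with the passage from homotopy colimit to strict colimit. The energy and monotonicity estimates underlying \cref{lma:dga_in_chart} must be applied uniformly across all pieces and simultaneously with the gluing identifications of \cite[Theorem 2.43]{asplund2022tangle}, so that the internal generators and their induced differentials agree on the overlaps indexed by $C_1$. Because each structure map $\mathcal A_e \to \mathcal A_v$ is the inclusion of a sub-dg-algebra freely generated by a subset of the Reeb chords, the diagram is cofibrant and its homotopy colimit coincides with the strict colimit $\mathcal A$; checking this cofibrancy and that the identifications of \cref{lma:dga_in_chart} respect it is where the genuine work lies.
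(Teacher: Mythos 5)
Your proposal follows essentially the same route as the paper: invoke the gluing formula of \cite[Theorem 2.43]{asplund2022tangle} for the simplicial decomposition $((C,C'),(\boldsymbol V,\boldsymbol V'))$ and then replace each local piece by its Darboux-chart model via \cref{lma:dga_in_chart}. The only point worth flagging is that the hypothesis of the gluing theorem you wave through as ``immediate by construction'' is, in the paper, pinned down specifically as the subcriticality of $P_0(F)$; otherwise your extra remarks on naturality and cofibrancy are consistent with (and slightly more detailed than) the paper's argument.
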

		\begin{proof}
			This follows from \cite[Theorem 2.43]{asplund2022tangle} which applies in our situation because $P_0(F)$ is subcritical (cf.\@ \cite[Remark 4.7]{asplund2021simplicial}) together with \cref{lma:dga_in_chart}. As remarked in the proof of \cite[Theorem 2.43]{asplund2022tangle}, the proof is almost identical to that of \cite[Theorem 2.33]{asplund2021simplicial}.
		\end{proof}
		\begin{rmk}
			The colimit in \cref{lma:colimit_loop} should be interpreted as follows: For each $x \in C_0 \cup C_1$ we have a dg-algebra $\mathcal A_x$ and for each vertex inclusion $C_0 \ni v \subset e \in C_1$ there is an inclusion of dg-algebras $\mathcal A_e \subset \mathcal A_v$, and the colimit is taken of this diagram of dg-algebras. 

			All dg-algebras are by definition semifree and hence the colimit is again a semifree dg-algebra that is generated by the union of the generators. See \cite[Remark 1.3 and Section 2.5]{asplund2021simplicial} for further details regarding coefficient rings and in which category the colimit is taken.
		\end{rmk}
		\begin{rmk}
			If $Q$ is a tree, we can by a Legendrian isotopy fit the entire $\varLambda(Q,F)$ inside a single arbitrarily small Darboux ball in $\partial Z_0(Q) \cong S^{2n-1}$. In this case the Chekanov--Eliashberg dg-algebra is computable directly without employing the gluing formula in \cref{lma:colimit_loop}.
		\end{rmk}
		We now compute each $\mathcal A_x$ appearing in the colimit in \cref{lma:colimit_loop}. The computation for $v \in V \cup E$ is identical to the computation in \cite[Section 4.3.2]{asplund2021simplicial}.
		\begin{description}
			\item[$v \in V$] The dg-subalgebra $\mathcal A_v$ for $v \in V$ is generated by $a_v$ in degree $1-n$ and $b_{e,v}$ in degree $2-n$ for each $e \in C_1$ adjacent to $v$. The differential on the generators is given by
			\[
				\partial a_v = \sum_{e \supset v} b_{e,v}, \quad \partial b_{e,v} = 0,
			\]
			where the sum is taken over all edges adjacent to $v$. The Morse flow trees counted in $\partial a_v$ are depicted in \cref{fig:flow_tree1}.
			\item[$v \in F_0$] The dg-subalgebra $\mathcal A_v$ for $v \in F_0$ is generated by $a_v$ in degree $1-n$, $b_v$ in degree $2-n$, $b_{e,v}$ in degree $2-n$ for each $e \in C_1$, and $d_{e,v}$ in degree $3-n$ for each frozen $e \in C_1$ that is adjacent to $v$. The differential on the generators is given by
			\begin{align*}
				\partial a_v = -b_v + \sum_{e \supset v} b_{e,v}, \quad \partial b_v = \sum_{\substack{e \supset v \\ \text{frozen}}} d_{e,v}, \quad \partial b_{e,v} &= \begin{cases}
					d_{e,v} & \text{$e$ frozen} \\
					0, & \text{$e$ non-frozen}
				\end{cases}, \quad \partial d_{e,v} = 0.
			\end{align*}
			where the sum is taken over all edges adjacent to $v$. The appearance of the extra term $b_v$ and the generators $d_{e,v}$ in the differential comes from the extra unknot boundary component of $\varLambda_v$ that lives in the boundary of $B^{2n-2} \#_{B^{2n-4}} \cdots \#_{B^{2n-4}} B^{2n-2}$ (see \cref{fig:locally_at_frozen_vertex}). The Morse flow trees counted in $\partial a_v$ are depicted in \cref{fig:flow_tree2,fig:flow_tree3}. The Morse flow trees counted in $\partial b_v$ and $\partial b_{e,v}$ are similar to those in \cref{fig:flow_tree1} except that they appear in the boundary of the corresponding gray or blue copy of $B^{2n-2}$.
			\item[$e \in E$] The dg-subalgebra $\mathcal A_e$ for $e \in E$ is generated by $a_{e,v_1}$ and $a_{e,v_2}$ both in degree $1-n$, $b_{e,v_1}$ and $b_{e,v_2}$ both in degree $2-n$, $g_e$ in degree $0$ and $g_e^\ast$ in degree $2-n$. By construction $e\in E$ corresponds to an arrow $e$ in $Q$, and we let $v_1,v_2 \in C_0$ denote the two vertices adjacent to $e$. The differential on the generators is given by 
			\begin{align*}
				\partial a_{e,v_1} &= \begin{cases}
					b_{e,v_1} - g_eg_e^\ast, & \text{if $e \colon v_1 \to v_2$} \\
					b_{e,v_1} + g_e^\ast g_e, & \text{if $e \colon v_2 \to v_1$}
				\end{cases}, \quad \partial a_{e,v_2} = \begin{cases}
					b_{e,v_2} + g_e^\ast g_e, & \text{if $e \colon v_1 \to v_2$} \\
					b_{e,v_2} - g_e g_e^\ast, & \text{if $e \colon v_2 \to v_1$}
				\end{cases}\\
				\partial b_{e,v_1} &= \partial b_{e,v_2} = \partial g_e = \partial g_e^\ast = 0.
			\end{align*}
			The Morse flow trees counted by $\partial a_{e,v_1}$ and $\partial a_{e,v_1}$ are depicted in \cref{fig:flow_tree4,fig:flow_tree5,fig:flow_tree6,fig:flow_tree7}.
			\item[$e \in F_1$] The dg-subalgebra $\mathcal A_e$ for $e \in F_1$ is generated by the set
			\[
				\{a_{e,v_1},a_{e,v_2},c,b_{e,v_1},b_{e,v_2},d_{e,v_1},d_{e,v_2},a_{e,v_1}',a_{e,v_2}',h_e^\ast,h_e\}
			\]
			in degrees
			\[
				|a_{e,v_i}| = 1-n, \quad |c| = |b_{e,v_i}| = |a_{e,v_i}'| = 2-n, \quad |h_e^\ast| = |d_{e,v_i}| = 3-n, \quad |h_e| = 0,
			\]
			where by construction $e \in F_1$ corresponds to a frozen arrow $e$ in $Q$ and we let $v_1,v_2 \in C_0$ denote the two vertices adjacent to $e$. The differential on the generators is given by
			\begin{align*}
				\partial a_{e,v_1} &= \begin{cases}
					b_{e,v_1} - h_ec - a_{e,v_1}', & \text{if $e \colon v_1 \to v_2$} \\
					b_{e,v_1} + ch_e - a_{e,v_1}', & \text{if $e \colon v_2 \to v_1$}
				\end{cases}, \quad \partial a_{e,v_2} = \begin{cases}
					b_{e,v_2} + c h_e - a_{e,v_2}', & \text{if $e \colon v_1 \to v_2$} \\
					b_{e,v_2} - h_e c - a_{e,v_2}', & \text{if $e \colon v_2 \to v_1$}
				\end{cases}\\
				\partial a_{e,v_1}' &= \begin{cases}
					d_{e,v_1}- h_eh_e^\ast, & \text{if $e \colon v_1 \to v_2$} \\
					d_{e,v_1} + h_e^\ast h_e, & \text{if $e \colon v_2 \to v_1$}
				\end{cases}, \quad \partial a_{e,v_2}' = \begin{cases}
					d_{e,v_2}+h_e^\ast h_e, & \text{if $e \colon v_1 \to v_2$} \\
					d_{e,v_2}- h_eh_e^\ast, & \text{if $e \colon v_2 \to v_1$}
				\end{cases}\\
				\partial c &= h_e^\ast, \quad \partial b_{e,v_1} = \partial b_{e,v_2} = \partial h_e = \partial h_e^\ast = 0.
			\end{align*}
			This dg-algebra is computed as follows. The more non-degenerate front projection of $\varLambda_1 \cup \varLambda_2$ is shown in \cref{fig:frozen_edge_local_chord} and it is the high dimensional version of the singular Legendrian $A_2$-bouquet exhibited in \cite[Section 8.4.2]{an2018chekanov}. The Reeb chord $c$ goes from $\varLambda_2$ to $\varLambda_1$ if $e$ is oriented as $v_1 \to v_2$ in $Q$, and comes from the fact that the boundary of $\varLambda_1 \cup \varLambda_2$ in the blue copy of $B^{2n-2}$ is a $(n-2)$-dimensional standard Hopf link (see \cref{fig:locally_at_frozen_edge_vertex}). The generators $d_{e,v_i}$ are those of the corresponding $(n-3)$-dimensional unknot in the boundary of the $B^{2n-4}$. The generators $a_{e,v_1}',a_{e,v_2}',h_e^\ast,h_e$ are the generators of the standard $(n-2)$-dimensional standard Hopf link in the boundary of the blue copy of $B^{2n-2}$.

			The Morse flow trees counted in $\partial a_{e,v_1}$ and $\partial a_{e,v_2}$ are depicted in \cref{fig:flow_tree8,fig:flow_tree9,fig:flow_tree10,fig:flow_tree11,fig:flow_tree12,fig:flow_tree13} and those counted in $\partial c$ are depicted in \cref{fig:flow_tree14}. The Morse flow trees counted in $\partial a_{e,v_1}'$ and $\partial a_{e,v_2}'$ are similar to those depicted in \cref{fig:flow_tree4,fig:flow_tree5,fig:flow_tree6,fig:flow_tree7} except that they appear in the boundary of the blue copy of $B^{2n-2}$.
		\end{description}

		\begin{figure}[!htb]
			\centering
			\includegraphics{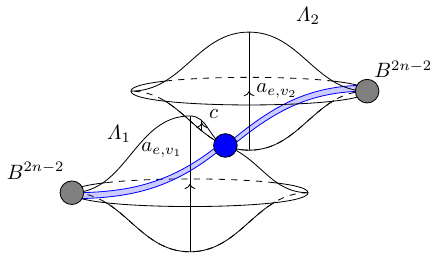}
			\caption{The more accurate front projection of the Legendrian $\varLambda_1 \cup \varLambda_2$ with boundary used to compute $\mathcal A_e$ for $e \in F_1$.}\label{fig:frozen_edge_local_chord}
		\end{figure}
		\begin{figure}[!htb]
			\centering
			\begin{subfigure}{0.32\textwidth}
					\centering
					\includegraphics{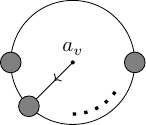}
					\caption{}\label{fig:flow_tree1}
			\end{subfigure}
			\begin{subfigure}{0.32\textwidth}
					\centering
					\includegraphics{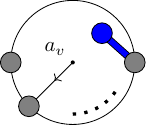}
					\caption{}\label{fig:flow_tree2}
			\end{subfigure}
			\begin{subfigure}{0.32\textwidth}
					\centering
					\includegraphics{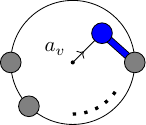}
					\caption{}\label{fig:flow_tree3}
			\end{subfigure}
			\begin{subfigure}{0.32\textwidth}
					\centering
					\includegraphics{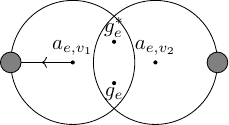}
					\caption{}\label{fig:flow_tree4}
			\end{subfigure}
			\begin{subfigure}{0.32\textwidth}
					\centering
					\includegraphics{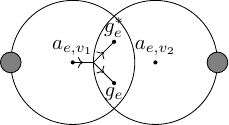}
					\caption{}\label{fig:flow_tree5}
			\end{subfigure}
			\begin{subfigure}{0.32\textwidth}
					\centering
					\includegraphics{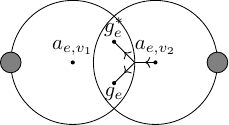}
					\caption{}\label{fig:flow_tree6}
			\end{subfigure}
			\begin{subfigure}{0.32\textwidth}
					\centering
					\includegraphics{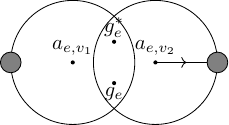}
					\caption{}\label{fig:flow_tree7}
			\end{subfigure}
			\begin{subfigure}{0.32\textwidth}
					\centering
					\includegraphics{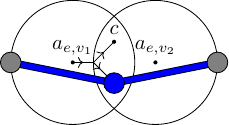}
					\caption{}\label{fig:flow_tree8}
			\end{subfigure}
			\begin{subfigure}{0.32\textwidth}
					\centering
					\includegraphics{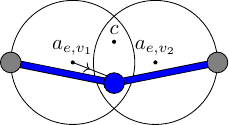}
					\caption{}\label{fig:flow_tree9}
			\end{subfigure}
			\begin{subfigure}{0.32\textwidth}
					\centering
					\includegraphics{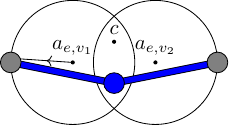}
					\caption{}\label{fig:flow_tree10}
			\end{subfigure}
			\begin{subfigure}{0.32\textwidth}
					\centering
					\includegraphics{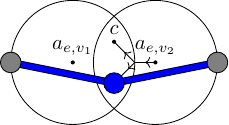}
					\caption{}\label{fig:flow_tree11}
			\end{subfigure}
			\begin{subfigure}{0.32\textwidth}
					\centering
					\includegraphics{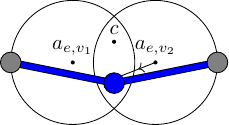}
					\caption{}\label{fig:flow_tree12}
			\end{subfigure}
			\begin{subfigure}{0.32\textwidth}
					\centering
					\includegraphics{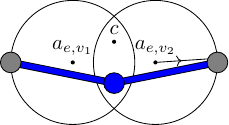}
					\caption{}\label{fig:flow_tree13}
			\end{subfigure}
			\begin{subfigure}{0.32\textwidth}
					\centering
					\includegraphics{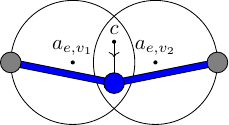}
					\caption{}\label{fig:flow_tree14}
			\end{subfigure}
			\caption{Morse flow trees describing the differential of $\mathcal A_x$.}
		\end{figure}
		\begin{rmk}
			We make a few comments regarding gradings and signs in the above computation of $\mathcal A_x$.
			\begin{enumerate}
				\item We use the cohomological grading convention in $\mathcal A_x$. The degree of a Reeb chord is given by the negative of its Conley--Zehnder index plus one, and the differential increases degree by one. This grading is the negative of the usual homological grading used in e.g.\@ \cite{chekanov2002differential,ekholm2005contact,ekholm2015legendrian}. The main reason for this choice is to ensure that the statement \cref{cor:orlov} makes sense, since the so-called surgery formula \cite[Theorem 5.8; Theorem 83; Theorem 1.1]{bourgeois2012effect,ekholm2017duality,asplund2020chekanov} holds with this choice of grading.
				\item The Maslov class of every Legendrian involved vanishes, and to define the degree of the generators corresponding to Reeb chords between two different components we have made an implicit choice of basepoint on each component. We have further chosen the same Maslov potential on every component.
				\item The signs appearing in $\mathcal A_x$ a priori depend on a number of choices, all of which we have left implicit. We however note that there is a unique spin structure, decreasing the number of choices by one.

				Let us compare the signs appearing in $\partial a_{e,v_1}$ and $\partial a_{e,v_2}$ for a non-frozen edge $e \colon v_1 \to v_2$. Having fixed choices so that $\partial a_{e,v_2} = b_{e,v_1} + g_eg_e^\ast$, the sign in front of $g_e^\ast g_e$ (which is denoted by $\nu_{\text{end}}$ in \cite[Theorem 1.1]{karlsson2017compute}) only depends on the order of the generators and differs from the sign in front of $g_eg_e^{\ast}$ in $\partial a_{e,v_2}$, respectively. The same reasoning applies to $\partial a'_{e,v_1}$ and $\partial a'_{e,v_2}$ for $e \in F_1$. This is also the reason for the sign difference between the terms $h_e c$ and $c h_e$ present in $\partial a_{e,v_1}$ and $\partial a_{e,v_2}$. The sign on front of $a'_{e,v_i}$ in $\partial a_{e,v_i}$ is determined by demanding $\partial^2 = 0$, using the assumption that we have equipped the moduli spaces of holomorphic curves with coherent orientations. This is also how the signs in $\partial a_v$ for $v\in F_0$ and $\partial c$ are determined.

				Since we have made no explicit choices and only computed relative sign differences, the actual Chekanov--Eliashberg dg-algebra may differ from $\mathcal A_x$ by a composition of automorphisms $z \mapsto -z$ for some generator $z \in \mathcal A_x$. The isomorphism class is however unchanged, and the relative signs suffices for our purposes.
			\end{enumerate}
		\end{rmk}
		\begin{proof}[Proof of \cref{thm:main}]
			By \cref{lma:colimit_loop} it suffices to prove that $\mathcal A$ is quasi-isomorphic to the relative Ginzburg algebra $\mathscr G^\ast_n(Q,F)$ (see \cref{dfn:rel_ginzburg}). Consider the map $\tau \colon \mathcal A \to \mathscr G^\ast_n(Q,F)$ defined on generators as
			\begin{align*}
				\tau(b_{e,v}) &= \begin{cases}
					g_e g_e^\ast, & \text{if $e \colon v \to \bullet$} \\
					-g_e^\ast g_e, & \text{if $e \colon \bullet \to v$}
				\end{cases},\; \text{$e$ non-frozen}, \quad \tau(b_v) = [g_e,g_e^\ast]_v^{Q_1 \smallsetminus F_1},\; \text{$v$ frozen} \\
				\tau(a_v) &= a_v,\; \text{$v$ non-frozen}, \quad \tau(g_e) = g_e, \quad \tau(g_e^\ast) = g_e^\ast, \quad \tau(h_e) = h_e \\
				\tau(x) &= 0, \; \text{all other generators},
			\end{align*}
			and extended to the whole of $\mathcal A$ by linearity and multiplicativity. Furthermore define $\varphi \colon \mathscr G^\ast_n(Q,F) \to \mathcal A$ on generators by
			\begin{align*}
				\varphi(a_v) = a_v - \sum_{e \supset v} a_{e,v}, \; \text{$v$ non-frozen}, \quad \varphi(x) = x, \; \text{all other generators},
			\end{align*}
			and extended to the whole of $\mathscr G^\ast_n(Q,F)$ by linearity and multiplicativity. Finally define $K \colon \mathcal A \to \mathcal \mathcal A$ on generators by
			\begin{align*}
				K(b_{e,v}) &= a_{e,v}, \quad K(d_{e,v}) = \begin{cases}
					h_ec + a'_{e,v}, & e \colon v \to \bullet \\
					-ch_e + a'_{e,v}, & e \colon \bullet \to v
				\end{cases}, \quad K(b_v) = -a_v + \sum_{e \supset v} a_{e,v} \\
				K(h_e^\ast) &= c, \quad K(x) = 0, \; \text{all other generators},
			\end{align*}
			and extended to the whole of $\mathcal A$ by linearity and as an $(\id,\varphi\tau)$-derivation. By an easy but tedious check we have that $\tau$ and $\varphi$ are chain maps and $K$ is a chain homotopy $\varphi \tau \simeq \id$. Applying the gluing formula in \cref{lma:colimit_loop} finishes the first part of the proof.

			By \cite[Remark 2.8]{asplund2020chekanov} there is a canonical dg-subalgebra $\mathcal B \subset CE^\ast(\varLambda(Q,F);Z_0(Q))$ that is quasi-isomorphic to the dg-subalgebra generated by the boundary $\partial \overline \varLambda(Q,F) \subset \partial P_0(F)$. By the computation of the dg-algebra $\mathcal A$ we have that $\mathcal B$ is generated by $\left\{b_v,d_{e,v},a'_{e,v},h_e,h_e^\ast\right\}$. By construction the boundary $\partial \overline \varLambda(Q,F) \subset \partial P_0(F)$ is a smooth Legendrian unknot link so that when attaching a top Weinstein handle to each component of $\partial \overline \varLambda(Q,F)$ and call the result $P(F)$, we have that $P(F)$ is a disjoint collection of plumbings of copies of $T^\ast S^{n-1}$ according to the quiver $F$. Repeating the above argument with a trivial frozen subquiver with $Q = F$ (or appealing to \cite[Corollary 4.16]{asplund2021simplicial}) yields the result. To spell it out explicitly, we define maps $\sigma \colon \mathcal B \to \mathscr G^\ast_{n-1}(F)$, $\varepsilon \colon \mathscr G^\ast_{n-1}(F) \to \mathcal B$ and $J \colon \mathcal B \to \mathcal B$ on generators as
			\begin{align*}
				\sigma(d_{e,v}) &= \begin{cases}
					h_eh_e^\ast, & \text{if $e \colon v \to \bullet$} \\
					-h_e^\ast h_e, & \text{if $e \colon \bullet \to v$}
				\end{cases}, \quad \sigma(a'_{e,v}) = 0, \quad \sigma(x) = x , \; \text{all other generators}.\\
				\varepsilon(b_v) &= b_v - \sum_{e \supset v} a'_{e,v}, \quad \varepsilon(x), \; \text{all other generators} \\
				J(d_{e,v}) &= a'_{e,v}, \quad J(x) = 0 \; \text{all other generators}.
			\end{align*}
			We extend $\sigma$ and $\varepsilon$ to all of $\mathcal B$ and $\mathscr G^\ast_{n-1}(F)$ by linearity and multiplicativity, and extend $J$ to all of $\mathcal B$ by linearity and as an $(\id,\varepsilon \sigma)$-derivation. As above, one can check that $\sigma$ and $\varepsilon$ are chain maps and $J$ specifies a chain homotopy equivalence $\varepsilon \sigma \simeq \id$.

			Finally a straightforward check shows that the following diagram commutes
			\[
				\begin{tikzcd}[row sep=scriptsize, column sep=scriptsize]
					\mathcal B \rar[hook]{\text{incl.}} \dar{\sigma} & CE^\ast(\varLambda(Q,F);Z_0(Q)) \dar{\tau} \\
					\mathscr G^{\ast}_{n-1}(F) \rar{G} & \mathscr G^\ast_n(Q,F)
				\end{tikzcd},
			\]
			where $G \colon \mathscr G^\ast_{n-1}(F) \to \mathscr G^\ast_n(Q,F)$ is the map defined in \cref{lma:inclusion_rel_ginz}.
		\end{proof}

\bibliographystyle{alpha}
\bibliography{relginz}

\newcommand{\etalchar}[1]{$^{#1}$}
\begin{thebibliography}{NRS{\etalchar{+}}20}

\bibitem[AB20]{an2018chekanov}
Byung~Hee An and Youngjin Bae.
\newblock A {C}hekanov--{E}liashberg algebra for {L}egendrian graphs.
\newblock {\em J. Topol.}, 13(2):777--869, 2020.

\bibitem[AE22]{asplund2020chekanov}
Johan Asplund and Tobias Ekholm.
\newblock Chekanov--{E}liashberg dg-algebras for singular {L}egendrians.
\newblock {\em J. Symplectic Geom.}, 20(3):509--560, 2022.

\bibitem[Asp21]{asplund2021fiber}
Johan Asplund.
\newblock Fiber {F}loer cohomology and conormal stops.
\newblock {\em J. Symplectic Geom.}, 19(4):777--864, 2021.

\bibitem[Asp22]{asplund2022tangle}
Johan Asplund.
\newblock Tangle contact homology.
\newblock {\em arXiv:2210.03036}, 2022.

\bibitem[Asp23]{asplund2021simplicial}
Johan Asplund.
\newblock Simplicial descent for {C}hekanov--{E}liashberg dg-algebras.
\newblock {\em J. Topol.}, 16(2):489--541, 2023.

\bibitem[BC14]{bourgeois2014bilinearized}
Fr\'{e}d\'{e}ric Bourgeois and Baptiste Chantraine.
\newblock Bilinearized {L}egendrian contact homology and the augmentation
  category.
\newblock {\em J. Symplectic Geom.}, 12(3):553--583, 2014.

\bibitem[BCS24]{bozec2020relative}
Tristan Bozec, Damien Calaque, and Sarah Scherotzke.
\newblock Relative critical loci and quiver moduli.
\newblock {\em Ann. Sci. \'{E}c. Norm. Sup\'{e}r. (4)}, 57(2):553--614, 2024.

\bibitem[BD19]{brav2019relative}
Christopher Brav and Tobias Dyckerhoff.
\newblock Relative {C}alabi--{Y}au structures.
\newblock {\em Compos. Math.}, 155(2):372--412, 2019.

\bibitem[BEE12]{bourgeois2012effect}
Fr\'{e}d\'{e}ric Bourgeois, Tobias Ekholm, and Yasha Eliashberg.
\newblock Effect of {L}egendrian surgery.
\newblock {\em Geom. Topol.}, 16(1):301--389, 2012.
\newblock With an appendix by Sheel Ganatra and Maksim Maydanskiy.

\bibitem[Che02]{chekanov2002differential}
Yuri Chekanov.
\newblock Differential algebra of {L}egendrian links.
\newblock {\em Invent. Math.}, 150(3):441--483, 2002.

\bibitem[EES05]{ekholm2005contact}
Tobias Ekholm, John Etnyre, and Michael Sullivan.
\newblock The contact homology of {L}egendrian submanifolds in {${\Bbb
  R}^{2n+1}$}.
\newblock {\em J. Differential Geom.}, 71(2):177--305, 2005.

\bibitem[Ekh19]{ekholm2019holomorphic}
Tobias Ekholm.
\newblock Holomorphic curves for {L}egendrian surgery.
\newblock {\em arXiv:1906.07228}, 2019.

\bibitem[EL23]{ekholm2017duality}
Tobias Ekholm and Yank{\i} Lekili.
\newblock Duality between {L}agrangian and {L}egendrian invariants.
\newblock {\em Geom. Topol.}, 27(6):2049--2179, 2023.

\bibitem[EN15]{ekholm2015legendrian}
Tobias Ekholm and Lenhard Ng.
\newblock Legendrian contact homology in the boundary of a subcritical
  {W}einstein 4-manifold.
\newblock {\em J. Differential Geom.}, 101(1):67--157, 2015.

\bibitem[Gan12]{ganatra2012symplectic}
Sheel Ganatra.
\newblock {\em Symplectic cohomology and duality for the wrapped {F}ukaya
  category}.
\newblock ProQuest LLC, Ann Arbor, MI, 2012.
\newblock Thesis (Ph.D.)--Massachusetts Institute of Technology.

\bibitem[Gan23]{ganatra2019cyclic}
Sheel Ganatra.
\newblock Cyclic homology, {$S^1$}-equivariant {F}loer cohomology and
  {C}alabi--{Y}au structures.
\newblock {\em Geom. Topol.}, 27(9):3461--3584, 2023.

\bibitem[GPS20]{ganatra2017covariantly}
Sheel Ganatra, John Pardon, and Vivek Shende.
\newblock Covariantly functorial wrapped {F}loer theory on {L}iouville sectors.
\newblock {\em Publ. Math. Inst. Hautes \'{E}tudes Sci.}, 131:73--200, 2020.

\bibitem[GPS24]{ganatra2018microlocal}
Sheel Ganatra, John Pardon, and Vivek Shende.
\newblock Microlocal {M}orse theory of wrapped {F}ukaya categories.
\newblock {\em Ann. of Math. (2)}, 199(3):943--1042, 2024.

\bibitem[Kar17]{karlsson2017compute}
Cecilia Karlsson.
\newblock To compute orientations of {M}orse flow trees in {L}egendrian contact
  homology.
\newblock {\em arXiv:1704.05156}, 2017.

\bibitem[Kel11]{keller2011deformed}
Bernhard Keller.
\newblock Deformed {C}alabi--{Y}au completions.
\newblock {\em J. Reine Angew. Math.}, 654:125--180, 2011.
\newblock With an appendix by Michel Van den Bergh.

\bibitem[KS09]{kontsevich2009notes}
M.~Kontsevich and Y.~Soibelman.
\newblock Notes on {$A_\infty$}-algebras, {$A_\infty$}-categories and
  non-commutative geometry.
\newblock In {\em Homological mirror symmetry}, volume 757 of {\em Lecture
  Notes in Phys.}, pages 153--219. Springer, Berlin, 2009.

\bibitem[KTV21]{kontsevich2021pre}
Maxim Kontsevich, Alex Takeda, and Yiannis Vlassopoulos.
\newblock Pre-{C}alabi--{Y}au algebras and topological quantum field theories.
\newblock {\em arXiv:2112.14667}, 2021.

\bibitem[KW23]{keller2021introduction}
Bernhard Keller and Yu~Wang.
\newblock An introduction to relative {C}alabi--{Y}au structures.
\newblock In {\em Representations of algebras and related structures}, EMS Ser.
  Congr. Rep., pages 279--304. EMS Press, Berlin, [2023] \copyright 2023.

\bibitem[Leg23]{legout2023calabi}
No{\'e}mie Legout.
\newblock Calabi--{Y}au structure on the {C}hekanov--{E}liashberg algebra of a
  {L}egendrian sphere.
\newblock {\em arXiv:2304.03014}, 2023.

\bibitem[Li19]{li2019koszul}
Yin Li.
\newblock Koszul duality via suspending {L}efschetz fibrations.
\newblock {\em J. Topol.}, 12(4):1174--1245, 2019.

\bibitem[Ng23]{ng2023}
Lenhard Ng.
\newblock An ${L}$-infinity structure for {L}egendrian contact homology.
\newblock {\em arXiv:2311.14614}, 2023.

\bibitem[NRS{\etalchar{+}}20]{ng2020augmentations}
Lenhard Ng, Dan Rutherford, Vivek Shende, Steven Sivek, and Eric Zaslow.
\newblock Augmentations are sheaves.
\newblock {\em Geom. Topol.}, 24(5):2149--2286, 2020.

\bibitem[Sei12]{seidel2012fukaya}
Paul Seidel.
\newblock Fukaya {$A_\infty$}-structures associated to {L}efschetz fibrations.
  {I}.
\newblock {\em J. Symplectic Geom.}, 10(3):325--388, 2012.

\bibitem[ST16]{shende2016calabi}
Vivek Shende and Alex Takeda.
\newblock Calabi--{Y}au structures on topological {F}ukaya categories.
\newblock {\em arXiv:1605.02721}, 2016.

\bibitem[Syl19a]{sylvan2016partially}
Zachary Sylvan.
\newblock On partially wrapped {F}ukaya categories.
\newblock {\em J. Topol.}, 12(2):372--441, 2019.

\bibitem[Syl19b]{sylvan2019orlov}
Zachary Sylvan.
\newblock Orlov and {V}iterbo functors in partially wrapped {F}ukaya
  categories.
\newblock {\em arXiv:1908.02317}, 2019.

\bibitem[Wu23]{wu2023relative}
Yilin Wu.
\newblock Relative cluster categories and {H}iggs categories.
\newblock {\em Adv. Math.}, 424:Paper No. 109040, 112, 2023.

\bibitem[Yeu16]{yeung2016relative}
Wai-{K}it Yeung.
\newblock Relative {C}alabi--{Y}au completions.
\newblock {\em arXiv:1612.06352}, 2016.

\end{thebibliography}
\end{document}